\theoremstyle{plain}
\newtheorem{thm}{Theorem}[section]
\newtheorem{lem}[thm]{Lemma}
\newtheorem{prop}[thm]{Proposition}
\newtheorem*{THM}{Theorem}
\newtheorem{cor}[thm]{Corollary}
\newtheorem{rem}[thm]{Remark}
\newtheorem{defn}[thm]{Definition}
\numberwithin{equation}{section}
\newcommand{\Z}{\mathbb Z}
\newcommand{\C}{\mathbb C}
\newcommand{\fg}{\mathfrak{g}}
\newcommand{\fb}{\mathfrak{b}}
\newcommand{\fl}{\mathfrak{l}}
\newcommand{\fm}{\mathfrak{m}}
\newcommand{\ft}{\mathfrak{t}}
\newcommand{\fu}{\mathfrak{u}}
\newcommand{\fq}{\mathfrak{q}}
\newcommand{\fp}{\mathfrak{p}}
\newcommand{\B}{\mathcal{B}}
\newcommand{\V}{\mathcal{V}}
\begin{document}

\title[Affine pavings of Hessenberg varieties]{Affine pavings of Hessenberg varieties for semisimple groups}
\author[M. Precup]{Martha Precup}
\address{Department of Mathematics\\University of Notre Dame\\ Notre Dame, IN 46556}
\email{mprecup@nd.edu}

%
%
%

\begin{abstract}
In this paper we consider certain closed subvarieties of the flag variety, known as Hessenberg varieties.  We prove that Hessenberg varieties corresponding to nilpotent elements which are regular in a Levi factor are paved by affines.  We provide a partial reduction from paving Hessenberg varieties for arbitrary elements to paving those corresponding to nilpotent elements.  As a consequence, we generalize results of Tymoczko asserting that Hessenberg varieties for regular nilpotent and arbitrary elements of $\mathfrak{gl}_n(\C)$ are paved by affines.  For example, our results prove that any Hessenberg variety corresponding to a regular element is paved by affines.  As a corollary, in all these cases the Hessenberg variety has no odd dimensional cohomology.
\end{abstract}
\maketitle
\section{Introduction and Results}

\noindent This paper investigates the topological structure of Hessenberg varieties, a family of subvarieties of the flag variety introduced in \cite{dMPS}.  We prove that under certain conditions Hessenberg varieties over a complex, linear, reductive algebraic group $G$ have a paving by affines.  This paving is given explicitly by intersecting these varieties with the Schubert cells corresponding to a particular Bruhat decomposition, which form a paving of the flag variety.  This result generalizes results of J. Tymoczko in \cite{T,T2,T3}.  

Let $G$ be a linear, reductive algebraic group over $\C$, $B$ a Borel subgroup, and let $\fg$, $\fb$ denote their respective Lie algebras.  A Hessenberg space $H$ is a linear subspace of $\fg$ that contains $\fb$ and is closed under the Lie bracket with $\fb$.  Fix an element $X\in \fg$ and a Hessenberg space $H$.  The Hessenberg variety, $\B(X,H)$, is the subvariety of the flag variety $G/B=\B$ consisting of all $g\cdot \fb$ such that $g^{-1}\cdot X\in H$ where $g\cdot X$ denotes the adjoint action $Ad(g)(X)$.  

We say that a nilpotent element $N$ of a reductive Lie algebra $\fm$ is a regular nilpotent element in $\fm$ if $N$ is in the dense adjoint orbit within the nilpotent elements of $\fm$.  Suppose $N$ is a regular nilpotent element in a Levi subalgebra $\fm$ of $\fg$.  In this case, we prove that there is a torus action on $\B(N,H)$ with a fixed point set consisting of a finite collection of points.  This action yields a vector bundle over each fixed point, giving an affine paving of $\B(N,H)$ by its intersection with the Schubert cells paving $\B$.   Our argument is inspired by the proof by C. De Concini, G. Lusztig and C. Procesi that Springer fibers are paved by affines \cite{dCLP}.  The main result is as follows.

\begin{THM}  Fix a Hessenberg space $H$ with respect to $\fb$.  Let $N\in \fg$ be a nilpotent element such that $N$ is regular in some Levi subalgebra $\fm$ of $\fg$.  Then there is an affine paving of $\B(N,H)$ given by the intersection of each Schubert cell in $\B$ with $\B(N,H)$. 
\end{THM}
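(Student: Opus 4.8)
The plan is to produce a single cocharacter whose Bialynicki--Birula decomposition on $G/B$ is the Bruhat decomposition and which preserves $\B(N,H)$, so that the attracting cells inside $\B(N,H)$ are exactly the intersections $\B(N,H)\cap C_w$ with the Schubert cells $C_w=BwB/B$. First I would construct the relevant torus. Arrange $N$ so that $\fm=\mathrm{Lie}(M)$ for a standard Levi $M\supseteq T$ and $N$ is a regular nilpotent of $\fm$ lying in $\fn$. By Jacobson--Morozov, complete $N$ to a principal $\mathfrak{sl}_2$-triple in $\fm$ and let $\lambda\colon\C^\ast\to T$ be the associated cocharacter, so that $\mathrm{Ad}(\lambda(t))N=t^2N$ with $d\lambda=2\rho_{\fm}^\vee$ regular in $\fm$. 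Let $S=Z(M)^\circ\subseteq T$ be the connected centre of $M$; since $N$ is regular in $\fm$, $S$ is a maximal torus of the reductive part of $Z_G(N)$, so $\mathrm{Ad}(S)$ fixes $N$. Set $A=\langle\lambda,S\rangle\subseteq T$. Because $Z_G(S)=M$ and $Z_M(\lambda)=T$, we get $Z_G(A)=T$; hence $A$ is regular and $(G/B)^A=(G/B)^T=\{wB:w\in W\}$ is finite.

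Next I would verify the two compatibilities. For $A$-stability of $\B(N,H)$: if $g^{-1}\cdot N\in H$ and $a=\lambda(t)^k s\in A$ with $s\in S$, then $(ag)^{-1}\cdot N=t^{-2k}(g^{-1}\cdot N)\in H$, since $H$ is a linear subspace and $S$ fixes $N$; thus $A$ acts on $\B(N,H)$. For Bruhat compatibility, choose $\nu=\lambda+c\,s_0\in A$, where $s_0$ is the dominant cocharacter of $S$ defining the parabolic $P=MU_P\supseteq B$ and $c\gg0$; then $\langle\alpha,\nu\rangle>0$ for every positive root $\alpha$ (the roots of $M$ are handled by $\lambda$ and the remaining positive roots by $s_0$), so $\nu$ is dominant regular, its fixed locus on $G/B$ is $\{wB\}$, and its attracting cell at $wB$ is the Schubert cell $C_w$. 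Since $\B(N,H)$ is closed and $\nu$-stable, $\lim_{t\to0}\nu(t)x\in\B(N,H)$ for every $x\in\B(N,H)$, whence $\B(N,H)=\bigsqcup_{w\in W}\bigl(\B(N,H)\cap C_w\bigr)$, and $\B(N,H)\cap C_w$ is precisely the $\nu$-attracting set of $wB$ inside $\B(N,H)$; in particular it is nonempty only when $wB\in\B(N,H)$, i.e.\ $w^{-1}\cdot N\in H$.

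It remains to show that each nonempty cell $\B(N,H)\cap C_w$ is an affine space, which is the crux of the argument and where the De Concini--Lusztig--Procesi technique enters. Using the isomorphism $U_w\cong C_w$, $u\mapsto uwB$, with $U_w=U\cap wU^-w^{-1}\cong\A^{\ell(w)}$ in root-subgroup coordinates $x_\alpha$, identify the cell with $Y_w=\{u\in U_w:\mathrm{Ad}((uw)^{-1})N\in H\}$. This is a closed subvariety of $\A^{\ell(w)}$ stable under the linear $\C^\ast$-action $x_\alpha\mapsto t^{\langle\alpha,\nu\rangle}x_\alpha$, whose weights are all strictly positive, so $\lim_{t\to0}\nu(t)\cdot u=0$ for every $u\in Y_w$. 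I would decompose the condition $\mathrm{Ad}((uw)^{-1})N\equiv0\pmod H$ into $\nu$-weight components and prove that, ordering the coordinates $x_\alpha$ by weight, the equations can be solved triangularly---expressing a distinguished subset of the $x_\alpha$ as polynomials in the remaining ones---so that $Y_w$ is the graph of a morphism over a coordinate subspace, hence an affine space. The regularity of $N$ in $\fm$ is exactly what supplies the nondegeneracy: it controls the $\nu$-graded pieces of the infinitesimal operator $x\mapsto[x,w^{-1}\cdot N]\bmod H$ on $\mathrm{Lie}(U_w)$, forcing the leading-weight part of each equation to be linear in a fresh coordinate. Establishing this graded solvability---the De Concini--Lusztig--Procesi-type input that upgrades the contracting subvariety $Y_w$ to an affine space, i.e.\ a vector bundle over the single fixed point $wB$---is the main obstacle; once it is in place, the disjoint union above is the desired affine paving.
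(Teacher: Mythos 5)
Your framing of the easy half is correct, and your single cocharacter $\nu=\lambda+c\,s_0$ is a nice device: it is dominant regular, it scales $N$ by $t^2$ (since $s_0$ vanishes on $\Delta_M$), so $\B(N,H)$ is $\nu$-stable, the Bialynicki--Birula cells of $\nu$ are the Schubert cells, and nonemptiness of $X_w\cap\B(N,H)$ forces $N\in\dot{w}\cdot H$, matching the criterion in Proposition \ref{smooth}. (The paper instead runs a tower of two cocharacters, $\lambda$ and $\mu$, precisely because its $\lambda$ is dominant but not regular; your combined $\nu$ would collapse that tower to one step.) But the crux --- that each nonempty intersection is an affine space --- is exactly where you stop: you assert a ``graded triangular solvability'' of the equations cutting out $Y_w$, claim that regularity of $N$ in $\fm$ forces each leading-weight equation to be linear in a fresh coordinate, and then concede this is ``the main obstacle.'' That step is the entire content of the theorem, not a verification: a closed $\C^*$-stable subvariety of $\A^{\ell(w)}$ contracting to the origin with strictly positive weights need not be an affine space (a union of two coordinate axes contracts, as does a singular contracting curve), so positivity alone buys nothing, and nothing in your sketch explains why the proposed elimination never degenerates as you move along the cell.

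The paper fills this gap with two inputs your proposal lacks. First, a global computation of the adjoint orbit: $U\cdot N=N+\V$ with $\V\subset\fu$ a direct sum of root spaces and $N\notin\V$ (Corollary \ref{Uorb}); this rests on the surjectivity of $ad_N:\fl\to\fg_2$ and $ad_N:\fu_P\to\bigoplus_{i\geq 3}\fg_i$ (Lemma \ref{onto}), on the decomposition $\fg_2(\lambda)=V^-\oplus\bigoplus_{\alpha\in\Phi_N}\fg_{\alpha}\oplus V$ (Lemma \ref{empty}, which is where regularity of $N$ in $\fm$ and the minimality of $v_1\in W^L$ genuinely enter), and on closedness of unipotent orbits to convert a dimension count into equality of varieties. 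Second, smoothness: a base-change argument (Proposition \ref{smooth}) identifies $U(N,\dot{w}\cdot H)$ with a fiber product of smooth morphisms, so each nonempty $X_w\cap\B(N,H)$ is smooth of dimension $|\Phi_w|-\dim \V/(\V\cap\dot{w}\cdot H)$; only then does the Bass--Haboush linearization lemma (Lemma \ref{vb1}) upgrade the smooth, closed, $\C^*$-stable intersection to a sub-vector-bundle over the fixed point, hence an affine space. Note the order of operations: the paper never solves equations cell by cell; it controls the orbit once, globally on $U$, and smoothness then propagates to every cell. If you supply the orbit computation and the smoothness statement, your one-torus endgame would in fact be a small simplification of the paper's argument; without them, the proposal is missing its proof.
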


Theorem \ref{paving} below gives the complete statement of this result.  This generalizes Theorem 4.3 in \cite{T2} which states that the Hessenberg variety $\B(N,H)$ is paved by affines when $N\in \fg$ is a regular nilpotent element.  Moreover, we can extend this result to the Hessenberg variety $\B(X,H)$ corresponding to the arbitrary element $X\in \fg$ when $X$ is semisimple or the nilpotent part of $X$ in its Jordan decomposition satisfies the conditions of the main theorem (see Theorem \ref{pavingX}).  This implies that $\B(X,H)$ is paved by affines for all regular elements $X$.  We are therefore able to extend Tymoczko's result that the Hessenberg variety is paved by affine cells from all elements in $\mathfrak{gl}(n,\C)$, given in \cite{T}, to certain elements of an arbitrary linear, reductive Lie algebra.  Although our results are greatly influenced by results of Tymoczko, our proofs use a different approach.

The second section of this paper covers background information and facts used in the following sections.  In the third, we prove a key lemma which states that in certain cases the intersection of the Hessenberg variety $\B(X,H)$ with each Schubert cell is smooth.  Section 4 consists primarily of the statement and proof of Theorem \ref{paving}.  Last, we consider the case in which $X\in \fg$ is an arbitrary element with Jordan decomposition $X=S+N$ in section \ref{arbitrary}.  As a corollary of the results in this section, we compute the dimensions of the affine cells paving $\B(X,H)$ when $X$ is semisimple and when $X$ is an arbitrary regular element of $\fg$.

The author would like to thank Sam Evens for suggesting this problem and giving many valuable comments.  The work for this project was partially supported by the NSA.


\section{Preliminaries}

We state results and definitions from the literature which will be used in later sections.  All algebraic groups in this paper are assumed to be complex and linear.  Let $G$, $\fg$, and $\B$ be as in the section above.  


\subsection{Notation.}  In each section, we fix a standard Borel subgroup and call it $B$.  Let $T\subset B$ be a fixed maximal torus with Lie algebra $\ft$ and denote by $W$ the Weyl group associated to $T$.  Fix a representative $\dot{w}\in N_G(T)$ for each Weyl group element $w\in W= N_G(T)/T$.  Let $\Phi^+$, $\Phi^-$ and $\Delta$ denote the positive, negative, and simple roots associated to the previous data.  Let $\fg_{\gamma}$ denote the root space corresponding to $\gamma\in\Phi$ and fix a generating root vector $E_{\gamma}\in \fg_{\gamma}$.  Write $U$ for the maximal unipotent subgroup of $B$, $U^-$ for its opposite subgroup, and $\fu$ and $\fu^-$ for their respective Lie algebras.  

Given a standard parabolic subgroup $Q$ of $G$ with Levi decomposition $MU_Q$, we denote the Lie algebras of $Q$, $M$ and $U_Q$ by $\fq$, $\fm$ and $\fu_Q$ respectively.  Then $B_M:=B\cap M$ is a standard Borel subgroup of $M$ with Lie algebra $\fb_M:=\fb\cap \fm$.  Since $Q$ is standard, $M$ corresponds to a subset $\Delta_M$ of simple roots.  Denote by $\Phi(\fu_Q)$ and $\Phi_M$ the subsets of roots so that
		\[
				\fu_Q = \bigoplus_{\gamma\in \Phi(\fu_Q)} \fg_{\gamma} \quad \mbox{and} \quad 																\fm=\ft\oplus \bigoplus_{\gamma\in\Phi_M} \fg_{\gamma}.
		\]
In particular, $\fm$ has triangular decomposition $\fm=\fu_M^- \oplus \ft \oplus \fu_M$ where
		\[
				\fu_M=\bigoplus_{\gamma\in \Phi^+_M} \fg_{\gamma} \quad \mbox{and} \quad 														\fu_M^- = \bigoplus_{\gamma\in \Phi_M^-} \fg_{\gamma},
		\]
with $\Phi_M^{\pm}=\Phi_M \cap \Phi^{\pm}$.  Let $U_M$ denote the unipotent subgroup of $G$ with Lie algebra $\fu_M$.  Then $U_M$ is the maximal unipotent subgroup of $B_M$, and $\fu=\fu_M\oplus \fu_Q$.


\subsection{Hessenberg Varieties.}  We give the precise definition of a Hessenberg variety.

\begin{defn}  A subspace $H\subseteq \fg$ is a {\it Hessenberg space} with respect to $\fb$ if $\fb\subset H$ and $H$ is a $\fb$-submodule.
\end{defn}

Denote by $\Phi_H\subseteq \Phi$ the subset of roots such that $H=\ft \oplus \bigoplus_{\gamma \in\Phi_H} \fg_{\gamma}$.  Then the conditions that $H$ is a Hessenberg space are equivalent to requiring that $\Phi^+\subseteq \Phi_H$ and $\Phi_H$ be closed with respect to addition with roots from $\Phi^+$.  Let $X\in \fg$ and $H$ be some fixed Hessenberg space.  Set
		\[
				G(X,H)=\{ g\in G : g^{-1}\cdot X\in H \}
		\]
where $g\cdot X$ denotes $Ad(g)(X)$.  Then $G(X,H)$ is a subvariety of $G$ which is invariant under right multiplication by $B$.  Let
		\[
				\B(X,H)=\{ g\cdot \fb \in \B : g\in G(X,H)    \}
		\]
denote its image in the flag variety $\B$.  This is the Hessenberg variety associated to $X$ and $H$.  Note that when $H=\fb$, $\B(X,H)$ is the variety of Borel subalgebras containing $X$, denoted $\B^X$, and called the Springer variety of $X$.  In the other extreme, when $H=\fg$, the Hessenberg variety is the full flag variety $\B$.

\begin{defn}  We say $X\in \fg$ is in standard position with respect to $(\fb,\ft)$ if $X=S+N$ with $S\in \ft$ and $N\in \fu$.
\end{defn}

\begin{rem}  For any $X\in \fg$, there exists $g\in G$ so that $g\cdot X$ in standard position with respect to $(\fb,\ft)$.  Since the map $l_g: \B \to \B$, given by $l_g(a\cdot \fb) = ga\cdot \fb$ induces an isomorphism $l_g : \B(X,H) \to \B(g\cdot X,H)$, we may always assume $X$ is in standard position with respect to $(\fb,\ft)$.
\end{rem}


\subsection{Pavings.}
In what follows we show that for certain elements $X\in \fg$, $\B(X,H)$ is paved by affines.

\begin{defn}  A paving of an algebraic variety $Y$ is a filtration by closed subvarieties
		\[
				Y_0\subset Y_1 \subset \cdots \subset Y_i \subset \cdots \subset Y_d=Y.
		\]	
A paving is affine if $Y_i-Y_{i-1}$ is a finite, disjoint union of affine spaces.
\end{defn}

There is a well known affine paving of the flag variety given by the Bruhat decomposition.  Indeed, $\B=\bigsqcup_{w\in W}X_w$ where each $X_w=B\dot{w}B/B$ denotes the Schubert cell indexed by $w\in W$.  Each $X_w$ has the following explicit description.

\begin{lem} \label{S cells} Fix $w\in W$.  The following are isomorphic varieties:
		\begin{enumerate}
				\item the Schubert cell $X_w= B\dot{w}B/B$;
				\item the subgroup $U^w=\{ u\in U : \dot{w}^{-1} u \dot{w} \in U^- \}$; and 
				\item the Lie subalgebra, $\fu^w:= Lie(U^w) =\bigoplus_{\alpha\in\Phi_w}\fg_{\alpha}$ where 											$\Phi_w=\{ \gamma \in\Phi^+ : w^{-1}(\gamma)\in \Phi^- \}$.  In particular, $\dim U^w=|\Phi_w|$.
		\end{enumerate}
\end{lem}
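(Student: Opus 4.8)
The statement is classical; the plan is to establish the two isomorphisms $(3)\cong(2)$ and $(2)\cong(1)$ separately, the only structural input being that conjugation by $\dot w$ permutes root subgroups. Writing $U_\gamma\subseteq U$ for the one-parameter root subgroup with $Lie(U_\gamma)=\fg_\gamma$, we have $\dot w^{-1}U_\gamma\dot w=U_{w^{-1}(\gamma)}$, since $Ad(\dot w^{-1})\fg_\gamma=\fg_{w^{-1}(\gamma)}$. From here everything is bookkeeping with root subgroups together with the basic fact $U^-\cap B=\{e\}$.

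For $(2)\cong(3)$ I would first identify $U^w$ explicitly. For $u\in U_\gamma$ with $\gamma\in\Phi^+$ we have $\dot w^{-1}u\dot w\in U_{w^{-1}(\gamma)}$, which lies in $U^-$ precisely when $w^{-1}(\gamma)\in\Phi^-$, i.e. when $\gamma\in\Phi_w$. Thus $U^w$ is the subgroup directly spanned by $\{U_\gamma:\gamma\in\Phi_w\}$. The set $\Phi_w=\Phi^+\cap w(\Phi^-)$ is closed under root addition: if $\alpha,\beta\in\Phi_w$ and $\alpha+\beta\in\Phi$, then $\alpha+\beta\in\Phi^+$, while $w^{-1}(\alpha+\beta)=w^{-1}(\alpha)+w^{-1}(\beta)$ is a sum of negative roots, hence in $\Phi^-$, so $\alpha+\beta\in\Phi_w$. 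Consequently these root subgroups generate a closed unipotent subgroup with Lie algebra exactly $\fu^w=\bigoplus_{\gamma\in\Phi_w}\fg_\gamma$, and since $G$ is defined over $\C$ the exponential $\exp\colon\fu^w\to U^w$ is an isomorphism of varieties. This gives $(2)\cong(3)$ and the dimension count $\dim U^w=|\Phi_w|$.

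For $(1)\cong(2)$ I would first absorb the torus: since $\dot w$ normalizes $T$ and $B=UT$, we get $B\dot wB=UT\dot wB=U\dot wTB=U\dot wB$, so that $X_w=U\cdot(\dot w\cdot\fb)$. The stabilizer of the base point is $\mathrm{Stab}_U(\dot w\cdot\fb)=U\cap\dot wB\dot w^{-1}$, which is directly spanned by $\{U_\gamma:\gamma\in\Phi_w'\}$, where $\Phi_w'=\{\gamma\in\Phi^+:w^{-1}(\gamma)\in\Phi^+\}=\Phi^+\setminus\Phi_w$; call this subgroup $U_w'$. Since $\Phi_w'$ is also closed under root addition, $U$ is directly spanned by the two complementary families, so the multiplication map $U^w\times U_w'\to U$ is an isomorphism of varieties, and in particular $U=U^wU_w'$ with $U^w\cap U_w'=\{e\}$. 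Over $\C$ the orbit map induces an isomorphism $U/U_w'\cong X_w$, and the product decomposition identifies the coset space $U/U_w'$ with $U^w$: the first-factor projection $U\cong U^w\times U_w'\to U^w$ is constant on $U_w'$-cosets and descends to a morphism inverse to $U^w\hookrightarrow U/U_w'$. Composing yields $X_w\cong U^w$, i.e. $u\mapsto u\cdot(\dot w\cdot\fb)$ is an isomorphism.

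The one point requiring care — and the main obstacle — is upgrading the evident set-theoretic bijection $U^w\to X_w$, $u\mapsto u\cdot(\dot w\cdot\fb)$, to an isomorphism of varieties, since a bijective morphism need not be an isomorphism. I would handle this via the coset-space argument above, which is clean over $\C$, where orbit maps are separable and hence $U\cdot(\dot w\cdot\fb)\cong U/U_w'$. As an independent confirmation one can compute the differential at the identity, $\fu^w\to\fg/(\dot w\cdot\fb)$: since $\Phi_w\subseteq w(\Phi^-)$ is disjoint from $w(\Phi^+)$, none of the root spaces $\fg_\gamma$ (for $\gamma\in\Phi_w$) lies in $\dot w\cdot\fb=Ad(\dot w)\fb=\ft\oplus\bigoplus_{\gamma\in w(\Phi^+)}\fg_\gamma$, so the differential is injective and the map is an immersion, consistent with the conclusion.
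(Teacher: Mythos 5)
Your proof is correct. Note, though, that the paper offers no proof of this lemma at all: it is quoted as the classical description of Bruhat cells (the surrounding citations, e.g.\ to \cite{BGG}, concern the closure relations, not the cell structure itself), so your write-up supplies an argument where the paper simply appeals to standard theory. Your route is the standard one and it is sound: absorb the torus to get $X_w=U\cdot(\dot w\cdot \fb)$, identify the stabilizer $U\cap \dot wB\dot w^{-1}=U\cap\dot wU\dot w^{-1}$ (using that a unipotent element of $B$ must lie in $U$, since its image in $B/U\cong T$ is trivial), and pass through the coset space $U/U_w'$, which the complementary product decomposition identifies with $U^w$; separability in characteristic zero then upgrades the orbit map to an isomorphism, which is exactly the point you rightly flag as the one needing care. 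The single step you treat a bit informally is the identification of $U^w$ with the group directly spanned by $\{U_\gamma:\gamma\in\Phi_w\}$: your element-wise computation shows which root subgroups lie in $U^w$, but to conclude that $U^w$ is \emph{exactly} their product (and likewise to get the isomorphism $U^w\times U_w'\to U$) you are implicitly invoking the structure theorem that a closed subgroup of $U$ normalized by $T$ is directly spanned, in any order, by the root subgroups it contains (Humphreys \S 28, or Borel 14.4) --- applicable here since $U^w=U\cap\dot wU^-\dot w^{-1}$ is closed and $T$-stable. Citing that theorem explicitly, together with the fact that $\exp:\fu^w\to U^w$ is an isomorphism of varieties for unipotent groups in characteristic zero (which you do use for $(2)\cong(3)$), would make the argument fully airtight; as written there is no actual gap, only an uncited standard input.
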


Additionally, $\overline{X_w}=\bigsqcup_{w'\leq w} X_{w'}$ where $\leq$ denotes the Bruhat order on the Weyl group (see \cite{BGG}).  Set $\B_i=\bigsqcup_{w\in W;\; |\Phi_w|=i} \overline{X_w}$.  Then the $\B_i$ are closed subvarieties of $\B$ which give an affine paving of $\B$ since
		\[
				\B_i-\B_{i-1} = \bigsqcup_{w\in W;\; |\Phi_w|=i} X_w \cong \bigsqcup_{w\in W; \;  |\Phi_w|=i} \fu^w 														\cong \bigsqcup_{w\in W; \;  |\Phi_w|=i}\C^i.
		\]
The Hessenberg variety $\B(X,H)$ is a closed subvariety of $\B$, so the intersections $\B_i\cap \B(X,H) = \bigsqcup_{w\in W;\;  |\Phi_w|=i} \overline{X_w}\cap \B(X,H)$ are closed.  They form a paving of $\B(X,H)$ where
		\[
				\B_i \cap \B(X,H) - \B_{i-1}\cap \B(X,H) = \bigsqcup_{w\in W;\; |\Phi_w|=i} X_w\cap \B(X,H).
		\]
To show this paving is affine, we will show that each $X_w\cap \B(X,H)$ is homeomorphic to some affine space $\C^{d}$.  In summary,
\begin{rem} \label{paveit}  $\B(X,H)$ is paved by the intersections $\B_i\cap \B(X,H)$ and therefore paved by affines if $X_w\cap \B(X,H)\cong \C^d$ for all $w\in W$ and some $d\in \Z$.
\end{rem}  

Using the identification $X_w\cong U^w$, we can write the intersection explicitly as
		\[
				X_w\cap \B(X,H)=\{ u\dot{w}\cdot \fb : u\in U^w,\; u^{-1}\cdot X \in \dot{w} \cdot H \}.
		\]
A paving by affine cells computes the Betti numbers of an algebraic variety $Y$.

\begin{lem}  Let $Y$ be an algebraic variety with an affine paving, $Y_0\subset Y_1 \subset \cdots \subset Y_i \subset \cdots \subset Y_d=Y$.  Then the nonzero compactly supported cohomology groups of $Y$ are given by $H_c^{2k}(Y)= \Z^{n_k}$ where $n_k$ denotes the number of affine components of dimension $k$.
\end{lem}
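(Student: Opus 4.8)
The plan is to induct on the length $d$ of the filtration, using the long exact sequence in compactly supported cohomology associated to a closed subvariety and its open complement, together with two elementary computations as input. First I would record the two facts that drive the argument. Since $\C^n$ is homeomorphic to $\R^{2n}$, its compactly supported cohomology is $H_c^k(\C^n)=\Z$ when $k=2n$ and $0$ otherwise; in particular it is free abelian and concentrated in even degrees. Because $H_c^*$ is additive over finite disjoint unions, any finite disjoint union $V=\bigsqcup_j \C^{m_j}$ of affine spaces satisfies $H_c^k(V)=\bigoplus_{j:\, 2m_j=k}\Z$, which is again free abelian and supported in even degrees, with $H_c^{2m}(V)$ of rank equal to the number of cells of dimension $m$ in $V$.

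For the induction, set $V_i=Y_i-Y_{i-1}$, which is open in $Y_i$ with closed complement $Y_{i-1}$ and is a finite disjoint union of affine spaces by hypothesis. The base case $Y_0=V_0$ (taking $Y_{-1}=\emptyset$) is handled by the computation above. Assuming that $H_c^*(Y_{i-1})$ is free abelian, concentrated in even degrees, with $H_c^{2m}(Y_{i-1})$ of rank equal to the number of cells of dimension $m$ among $Y_0,\dots,Y_{i-1}$, I would feed the open-closed pair into the long exact sequence
\[
\cdots \to H_c^k(V_i) \to H_c^k(Y_i) \to H_c^k(Y_{i-1}) \xrightarrow{\ \delta\ } H_c^{k+1}(V_i) \to \cdots .
\]
The crux is a parity argument: since both $H_c^*(V_i)$ and, by the inductive hypothesis, $H_c^*(Y_{i-1})$ vanish in odd degrees, every connecting map $\delta$ has either zero source or zero target, so the sequence collapses. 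In odd degree, $H_c^k(Y_i)$ is trapped between two vanishing groups and hence is zero, while in even degree $k=2m$ one obtains the short exact sequence $0\to H_c^{2m}(V_i)\to H_c^{2m}(Y_i)\to H_c^{2m}(Y_{i-1})\to 0$.

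The main obstacle is to conclude that this short exact sequence splits, so that $H_c^{2m}(Y_i)\cong H_c^{2m}(V_i)\oplus H_c^{2m}(Y_{i-1})$; this follows because the quotient $H_c^{2m}(Y_{i-1})$ is free abelian by the inductive hypothesis, hence projective over $\Z$. Additivity of ranks then shows that $H_c^{2m}(Y_i)$ is free of rank equal to the number of cells of dimension $m$ among $Y_0,\dots,Y_i$, completing the induction. Taking $i=d$ yields the claimed formula $H_c^{2k}(Y)=\Z^{n_k}$. I would also note that the one non-formal ingredient, namely the existence and exactness of the displayed long exact sequence for $H_c^*$ over a closed-open decomposition, is standard for compactly supported sheaf cohomology and uses no special feature of Hessenberg varieties.
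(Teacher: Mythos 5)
Your proof is correct and complete: the induction over the filtration using the long exact sequence in compactly supported cohomology for the open--closed pair $(Y_i - Y_{i-1},\, Y_{i-1})$, the parity argument that collapses it into short exact sequences $0\to H_c^{2m}(Y_i-Y_{i-1})\to H_c^{2m}(Y_i)\to H_c^{2m}(Y_{i-1})\to 0$, and the splitting of these because the quotients are free abelian is exactly the standard argument for this statement. Note that the paper states this lemma without any proof, treating it as a known fact from the literature, so there is no paper proof to diverge from; your write-up simply supplies, correctly, the argument the paper leaves implicit.
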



\subsection{Associated Parabolic}\label{parab}   Let $N\in \fg$ be a nonzero nilpotent element.  By the Jacobson-Morozov theorem (\cite{CG}, Theorem 3.7.4) there exists a homomorphism of algebraic groups $\phi: SL_2(\C) \to G$ such that
		\[
				d\phi\begin{pmatrix} 0 & 1\\ 0 & 0 \end{pmatrix}=N.
		\]
Define a 1-parameter subgroup $\lambda: \C^* \to G$ so that $\lambda(z)= \phi \begin{pmatrix} z & 0 \\ 0 & z^{-1} \end{pmatrix}$ for all $z\in \C^*$, and consider the $\lambda$-weight spaces of $\fg$:
		\[
				\fg_i(\lambda)=\{ X\in \fg : \lambda(z)\cdot X=z^iX \;\; \forall z\in \C^* \}.
		\]
When there is no ambiguity we write $\fg_i$ instead of $\fg_i(\lambda)$.  Now, $N\in \fg_2$ and we can decompose $\fg$ as $\fg=\bigoplus_{i\in \Z} \fg_i$ where $[\fg_i,\fg_j]\subset \fg_{i+j}$ for all $i,j\in \Z$.  Let $L$ and respectively $P$ denote the connected algebraic subgroups of $G$ whose Lie algebras are $\fl:=\fg_0$ and $ \fp:= \bigoplus_{i\geq 0}\fg_i$.  It is known that
		\begin{enumerate}
				\item $P$ is a parabolic subgroup depending only on $N$ (not on the choice of $\phi$).
				\item $P=L U_P$ is a Levi decomposition and its unipotent radical $U_P$ has Lie algebra 									$\fu_P=\bigoplus_{i>0}\fg_i$.
		\end{enumerate}
				

\begin{lem}\label{onto}  The maps 
		\[
				ad_N: \fu_P \to \bigoplus_{i\geq 3} \fg_i \quad \mbox{and} \quad ad_N: \fl \to \fg_2
		\]
are onto.
\end{lem}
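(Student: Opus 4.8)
The plan is to regard $\fg$ as a module over $\mathfrak{sl}_2(\C)$ through the adjoint action composed with $\phi$, and to extract both surjectivity statements from the representation theory of $\mathfrak{sl}_2$. Write $e=\begin{pmatrix} 0 & 1\\ 0 & 0\end{pmatrix}$ and $h=\begin{pmatrix} 1 & 0\\ 0 & -1\end{pmatrix}$, so that $d\phi(e)=N$ and $d\phi(h)=d\lambda(1)$. Differentiating the defining relation $\lambda(z)\cdot X=z^iX$ shows that $\fg_i$ is precisely the $i$-eigenspace of $ad_{d\phi(h)}$, i.e. the weight-$i$ space for the $\mathfrak{sl}_2$-action. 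The grading property $[\fg_i,\fg_j]\subseteq \fg_{i+j}$ together with $N\in\fg_2$ already gives $ad_N(\fg_i)\subseteq\fg_{i+2}$ for all $i$, so in both maps the image lands in the asserted target; the content is that it fills the whole target.

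First I would reduce to a single irreducible summand. By complete reducibility of finite-dimensional $\mathfrak{sl}_2$-representations, decompose $\fg=\bigoplus_j V_j$ into irreducible submodules for this action; each $\fg_i$ decomposes compatibly as $\bigoplus_j (V_j)_i$, where $(V_j)_i$ is the $i$-weight space of $V_j$, and $ad_N$ respects this decomposition. Hence it suffices to prove: for an irreducible $\mathfrak{sl}_2$-module $V$ of highest weight $m\geq 0$ and any $i\geq 0$, the raising map $ad_e:V_i\to V_{i+2}$ is onto. This is the standard fact that $e$ acts by a nonzero scalar on every weight vector except the highest: in the usual basis $v_0,\dots,v_m$ with $v_k$ of weight $m-2k$, one has $e\cdot v_k=k(m-k+1)\,v_{k-1}$, so the only weight space annihilated by $e$ is the top one $V_m$.

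With the single-irreducible statement in hand, the conclusion is immediate. When $V_{i+2}\neq 0$ one has $i+2\leq m$, hence $i\leq m-2<m$, so $V_i\neq 0$ and $ad_e:V_i\to V_{i+2}$ is a nonzero map of one-dimensional spaces, thus onto; when $V_{i+2}=0$ surjectivity is automatic. Either way $ad_e:V_i\to V_{i+2}$ is onto for every $i\geq 0$. For the second map this is the case $i=0$, giving surjectivity of $ad_N:\fl=\fg_0\to\fg_2$. For the first map, $\fu_P=\bigoplus_{i\geq 1}\fg_i$, so for each fixed $j\geq 3$ the summand $\fg_{j-2}\subseteq \fu_P$ (since $j-2\geq 1$) maps onto $\fg_j$ by the single-irreducible statement with $i=j-2\geq 1\geq 0$; summing over $j\geq 3$ yields surjectivity onto $\bigoplus_{i\geq 3}\fg_i$.

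I expect the only real subtlety to be bookkeeping: correctly identifying $\fg_i$ with the $h$-weight space, confirming that $ad_N$ shifts weights by $+2$ rather than $-2$ (a point that hinges on the convention fixed for $\phi$ and $\lambda$, and which is pinned down here by $N\in\fg_2$), and tracking index ranges so that the source of each surjection genuinely lies in $\fu_P$ or $\fl$. The representation-theoretic heart — surjectivity of the raising operator on weight spaces of nonnegative weight — is entirely standard and poses no genuine obstacle.
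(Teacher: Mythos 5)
Your proof is correct, and it supplies exactly the argument the paper leaves implicit: Lemma \ref{onto} is stated there without proof, as a standard fact from the $\mathfrak{sl}_2$-theory behind the Jacobson--Morozov parabolic (cf.\ \cite{CG}). Your route --- identifying $\fg_i$ with the $\mathrm{ad}_{d\phi(h)}$-eigenspace, invoking complete reducibility, and using the formula $e\cdot v_k = k(m-k+1)\,v_{k-1}$ to see that the raising operator is surjective onto every weight space below the highest one --- is the standard justification, and your index bookkeeping is sound: the case $i=0$ gives $ad_N:\fl\to\fg_2$ onto, and for each $j\geq 3$ the source $\fg_{j-2}$ lies in $\fu_P$ since $j-2\geq 1$, so the image of $\fu_P$ contains every $\fg_j$ with $j\geq 3$ and hence their direct sum.
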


Generally, a 1-parameter subgroup $\lambda: \C^* \to T$ is {\it dominant} with respect to $\Phi^+$ if $\left< \gamma, \lambda \right> \geq 0$ for all $\gamma\in \Phi^+$.  Here $\left<\,,\,\right>$ is the natural pairing between the character and cocharacter groups of $G$ defined by $\lambda(z) \cdot E_{\gamma} = z^{\left< \gamma,\lambda \right>} E_{\gamma}$.  If $\lambda$ is the 1-parameter subgroup associated to nilpotent element $N$ as above, then $\lambda$ is dominant if and only if $P$ is a standard parabolic subgroup.


\subsection{A key lemma.}  There is a result yielding a vector bundle structure which we will use in the following sections.  It is a special case of Theorem 9.1 in \cite{BH}.

\begin{lem}\label{vb1}  Let $\pi: E \to Y$ be a vector bundle over a smooth variety $Y$ with a fiber preserving a linear $\C^*$-action on $E$ with strictly positive weights.  Let $E_0\subset E$ be a $\C^*$-stable, smooth, closed subvariety.  Then the restriction $\pi: E_0\to \pi(E_0)$ is a vector sub-bundle of $\pi: E \to Y$.
\end{lem}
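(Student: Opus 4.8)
The plan is to realize the conclusion as an instance of the Bialynicki--Birula decomposition, which is the content of Theorem 9.1 in \cite{BH}. The key feature to exploit is that the hypothesis of strictly positive weights forces every $\C^*$-orbit to contract into the zero section as $z\to 0$. First I would identify the $\C^*$-fixed locus of $E$. Since the action is linear on fibers with strictly positive weights, the only fixed point in each fiber is the origin, so $E^{\C^*}$ is precisely the zero section, which I identify with $Y$. Moreover, for every $e\in E$ the limit $\lim_{z\to 0} z\cdot e$ exists and lies in the zero section; because $\pi$ is $\C^*$-invariant (the action preserves fibers and acts trivially on $Y$), this limit has base point $\pi(e)$. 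Thus $E$ is the full attracting set of its fixed locus $Y$.

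Next I would transport this picture to $E_0$. Because $E_0$ is $\C^*$-stable and closed, for each $e\in E_0$ the limit $\lim_{z\to 0} z\cdot e$ again exists and lies in $E_0\cap Y$. Setting $Y_0 := E_0\cap Y$, the previous paragraph shows $\pi(E_0)=Y_0$: every point of $E_0$ projects to the base point of its limit, which lies in $Y_0$, while $Y_0$ itself projects isomorphically onto $Y_0$. Since $E_0$ is smooth, its fixed locus $E_0^{\C^*}=Y_0$ is smooth, so the base $\pi(E_0)$ is a smooth closed subvariety of $Y$, and $E_0$ is the entire attracting set of $Y_0$ inside $E_0$. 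Theorem 9.1 of \cite{BH} then applies to the smooth $\C^*$-variety $E_0$ with fixed locus $Y_0$: the attracting map $E_0\to Y_0$, $e\mapsto \lim_{z\to 0} z\cdot e$, is a $\C^*$-equivariant bundle whose fibers are the strictly positive weight parts of the relevant tangent spaces.

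Finally I would match this intrinsic bundle with the ambient linear structure to conclude it is a vector \emph{sub}-bundle. At a point $y\in Y_0$ the tangent space splits $\C^*$-equivariantly as $T_y E = T_y Y \oplus E_y$, with weight zero on $T_y Y$ and strictly positive weights on the fiber $E_y$; hence the positive-weight part of $T_y E$ is exactly $E_y$. The positive-weight part of $T_y E_0$ is therefore $T_y E_0 \cap E_y$, a linear subspace of $E_y$, and it is precisely the fiber of the attracting bundle of $E_0$ over $y$, so each fiber of $E_0\to Y_0$ is a linear subspace of $E_y$. Local triviality of these subspaces over $Y_0$, compatibly with the ambient charts, would follow from the local normal form provided by the Bialynicki--Birula theorem. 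I expect the main obstacle to be exactly this last compatibility check: verifying that the fiberwise-linear subspaces assemble into a sub-bundle in the same trivializations that trivialize $\pi:E\to Y$, rather than merely into an abstract affine bundle, which is where the linearity of the $\C^*$-action on $E$ (as opposed to a general smooth action) is essential.
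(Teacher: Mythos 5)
Your first two paragraphs are essentially the paper's complete argument --- in fact they supply more detail than the paper, which proves nothing here and simply remarks that the lemma ``is a special case of Theorem 9.1 in \cite{BH}.'' Identifying $E^{\C^*}$ with the zero section, noting that strict positivity of the weights makes $\lim_{z\to 0}z\cdot e$ exist with base point $\pi(e)$, deducing that $\pi(E_0)=E_0\cap Y=:Y_0$ is closed and smooth (the fixed locus of a reductive group acting on the smooth variety $E_0$ is smooth), and then applying Bass--Haboush to the contracting smooth $\C^*$-variety $E_0$, whose attracting map coincides with $\pi|_{E_0}$, so that $E_0$ becomes a $\C^*$-vector bundle over $Y_0$: this is exactly how the lemma is meant to follow from the citation. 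One small caveat: Theorem 9.1 of \cite{BH} concerns affine varieties, so you should either work over affine opens of $Y$ or note that in every application in this paper $Y$ is a Schubert cell and $E$ a trivial bundle over it, hence affine.

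The genuine gap is in your third paragraph, at precisely the step you flagged. The fiber of the attracting map over $y\in Y_0$ is $E_0\cap E_y$ as a subset of $E$; it is $\C^*$-equivariantly \emph{isomorphic} to the positive-weight part of $T_yE_0$, but it is not \emph{equal} to $T_yE_0\cap E_y$, and it need not be a linear subspace of $E_y$ at all. Take $Y$ a point, $E=\C^2$ with the linear action $z\cdot(x_1,x_2)=(zx_1,z^2x_2)$ (strictly positive weights), and $E_0=\{(x,x^2):x\in\C\}$. Then $E_0$ is smooth, closed and $\C^*$-stable, but it is a parabola, not a line; since trivializations of a bundle over a point are linear isomorphisms, no compatibility check along the lines you propose can succeed. (If all weights were equal this could not happen --- a smooth closed cone is a linear subspace --- but the $\lambda$- and $\mu$-weights on the root spaces occurring in Sections \ref{fixedpts}--\ref{arbitrary} are genuinely unequal.) So ``vector sub-bundle'' in the statement cannot be read as a fiberwise-linearly embedded sub-bundle; it must be read, as the paper implicitly does, as asserting only that the closed subvariety $E_0$ is itself a vector bundle via $\pi$ over the smooth closed base $\pi(E_0)$. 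That weaker statement is exactly what Theorem 9.1 of \cite{BH} delivers when applied to $E_0$, and it is all that the proofs of Theorems \ref{paving} and \ref{pavingX} use, since a tower of such bundles over a point has affine total space; your proposed final upgrade should simply be deleted rather than repaired.
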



\section{Fixed Point Reduction} \label{fixedpts}


Let $Q$ be a standard parabolic subgroup of $G$ with Levi decomposition $Q=MU_Q$.  The Levi subgroup $M$ is a connected, reductive algebraic group.  Thus its connected centralizer $Z:=Z_G(M)^0\subset T$ is a torus.  Consider the action of $Z$ on the flag variety, $\B$.  We can explicitly calculate the fixed point set $\B^Z$ using the following.

\begin{prop} \label{cg} (\cite{CG}, Proposition 8.8.7)  Each connected component of $\B^Z$ is isomorphic to the flag variety of $M$, $\B(M)$.  In particular, the connected component containing $\fb_0\in \B^Z$ is $M\cdot \fb_0 \cong M/(M\cap B_0)$ where $B_0$ is the Borel subgroup of $G$ such that $Lie(B_0)=\fb_0$ and $M\cap B_0$ is a Borel subgroup of $M$.
\end{prop}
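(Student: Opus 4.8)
The plan is to realize $\B^Z$ as the set of Borel subalgebras containing $\fz:=Lie(Z)$, to identify $M$ with the full centralizer $Z_G(Z)$, and then to exhibit each $M$-orbit on $\B^Z$ as a connected component isomorphic to $\B(M)$.

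First I would characterize the fixed points. A point $\fb_0\in\B$ with associated Borel subgroup $B_0$ is fixed by $Z$ precisely when $Ad(z)\fb_0=\fb_0$ for all $z\in Z$, i.e. when $Z$ lies in the stabilizer $N_G(B_0)$. Since Borel subgroups are self-normalizing, $N_G(B_0)=B_0$, and as $Z$ and $B_0$ are connected this gives
\[
  \B^Z=\{\, \fb_0\in\B : Z\subseteq B_0 \,\}=\{\, \fb_0\in\B : \fz\subseteq\fb_0 \,\}.
\]
Because $Z$ centralizes $M$, conjugation by any $m\in M$ fixes $\fz$ pointwise, so if $\fz\subseteq\fb_0$ then $\fz=Ad(m)\fz\subseteq Ad(m)\fb_0$; hence $M$ preserves $\B^Z$, and its orbits are the natural candidates for the connected components.

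Next I would establish the structural identity $Z_G(Z)=M$. Since $Z\subseteq T$, the centralizer $Z_G(Z)$ is connected, reductive, and contains $T$, so its Lie algebra is $\ft$ together with those root spaces $\fg_\gamma$ on which $Z$ acts trivially, namely those with $\gamma|_\fz=0$. By construction $\fz$ is the common kernel in $\ft$ of the roots in $\Phi_M$ (equivalently of $\Delta_M$), and a root vanishes on $\fz$ exactly when it lies in the span of $\Delta_M$, which for a Levi factor meets $\Phi$ in precisely $\Phi_M$. Hence $Lie(Z_G(Z))=\ft\oplus\bigoplus_{\gamma\in\Phi_M}\fg_\gamma=\fm$, and since $M\subseteq Z_G(Z)$ with both groups connected, $Z_G(Z)=M$. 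In particular the $Z$-fixed subspace $\fg^Z$ of $\fg$ is exactly $\fm$.

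With $M=Z_G(Z)$ in hand I would show that each orbit $M\cdot\fb_0$ is a connected component. The fixed locus of the torus $Z$ on the smooth variety $\B$ is itself smooth, with tangent space at $\fb_0$ the $Z$-invariants of $T_{\fb_0}\B=\fg/\fb_0$. By complete reducibility of the $Z$-action,
\[
  T_{\fb_0}\B^Z=(\fg/\fb_0)^Z=\fg^Z/(\fg^Z\cap\fb_0)=\fm/(\fm\cap\fb_0)=T_{\fb_0}(M\cdot\fb_0).
\]
Thus each orbit is open in $\B^Z$; since the orbits partition $\B^Z$ into open subsets and each is connected ($M$ is connected), they are exactly the connected components. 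Finally, the stabilizer of $\fb_0$ in $M$ is $M\cap N_G(B_0)=M\cap B_0$, so $M\cdot\fb_0\cong M/(M\cap B_0)$; choosing a maximal torus $T_0$ with $Z\subseteq T_0\subseteq B_0$, whence $T_0\subseteq M$, and intersecting the root decomposition of $\fb_0$ with $\fm$ shows $\fm\cap\fb_0$ is a Borel subalgebra of $\fm$. Therefore $M\cdot\fb_0\cong M/(M\cap B_0)=\B(M)$, as claimed. The main obstacle is the pair of identities $Z_G(Z)=M$ and ``$M\cap B_0$ is a Borel of $M$'', both of which rest essentially on $M$ being a Levi factor, so that the roots vanishing on $\fz$ are exactly $\Phi_M$ and a Borel of $G$ meets $Z_G(Z)$ in a Borel of $M$; once these are in place, the identification of orbits with connected components is a routine consequence of the smoothness of torus-fixed loci.
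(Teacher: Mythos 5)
Your argument is correct, but a caveat on the comparison: the paper gives no proof of Proposition \ref{cg} at all --- it is quoted verbatim from \cite{CG}, Proposition 8.8.7 --- so there is no internal proof to measure against, and what you have written is a self-contained verification of the cited fact. Your route (identify $\B^Z$ with the Borel subalgebras containing $\fz$, prove $Z_G(Z)=M$ by matching root spaces, then get openness of each $M$-orbit from smoothness of the torus-fixed locus together with the tangent-space identity $(\fg/\fb_0)^Z=\fm/(\fm\cap\fb_0)$) is sound at every step; in particular the identification of the roots vanishing on $\fz$ with $\Phi_M$ is exactly the point where the Levi hypothesis enters, as you say. The more common argument, essentially the one in \cite{CG}, runs in the opposite direction: once $M\cap B_0$ is known to be a Borel subgroup of $M$, the orbit $M\cdot\fb_0\cong M/(M\cap B_0)$ is \emph{complete}, hence closed in $\B^Z$; since every orbit contains a $T$-fixed point and $\B^T$ is finite, there are only finitely many orbits, and a finite partition of $\B^Z$ into disjoint closed connected sets forces each piece to be open and closed, i.e., a connected component. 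That version avoids invoking smoothness of fixed-point loci of torus actions (a nontrivial input, due essentially to Iversen/Fogarty), while your version yields the smoothness of $\B^Z$ and the tangent-space description as a byproduct --- which is in the spirit of how the paper later uses $\C^*$-fixed loci (e.g.\ in the proof of Theorem \ref{paving}, where smoothness of $X_w^{\lambda}\cap\B(N,H)$ is extracted from fixed points of a smooth variety). One point you should make explicit: to conclude that $M\cap B_0$ is a Borel \emph{subgroup} of $M$, and not merely that $\fm\cap\fb_0$ is a Borel subalgebra, you need $M\cap B_0$ connected; this follows because $M\cap B_0=Z_G(Z)\cap B_0=Z_{B_0}(Z)$ is the centralizer of a torus in a connected group, hence connected --- the same standard fact you already invoked for $Z_G(Z)$ itself.
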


There exists a 1-parameter subgroup $\mu: \C^* \to Z$ so that the $\mu$-fixed points and $Z$-fixed points of $\B$ coincide (\cite{H}, 25.1).  Every 1-parameter subgroup in $T$ is $W$-conjugate to a dominant 1-parameter subgroup so without loss of generality we may assume $\fm=\fg_0(\mu)$ and $\fu_Q=\bigoplus_{i>0} \fg_i(\mu)$.  

Recall that given a standard Levi subgroup $M$ of $G$, we write $W_M$ to denote the subgroup of the Weyl group associated to $M$.  Let
		\[
				W^M = \{ v\in W : \Phi_v\subseteq \Phi(\fu_Q) \},
		\]
where $\Phi_w=\{ \gamma\in \Phi^+ : w^{-1}(\gamma)\in \Phi^- \}$.  The elements of $W^M$ form a set of minimal representatives for $W_M \backslash W$ in the following sense.

\begin{lem} (\cite{K}, Proposition 5.13) \label{decomp}  Each $w\in W$ can be written uniquely as $w=yv$ with $y\in W_M$ and $v\in W^M$ such that $l(w)=l(y)+l(v)$.
\end{lem}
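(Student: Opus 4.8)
The plan is to identify $W^M$ with the set of minimal-length representatives of the right cosets $W_M\backslash W$ and then deduce the factorization and its uniqueness from standard length computations. The only input I need beyond elementary Coxeter combinatorics is the length criterion for left multiplication by a simple reflection: for $\alpha\in\Delta$ and any $v\in W$ one has $l(s_\alpha v)=l(v)+1$ if $v^{-1}(\alpha)\in\Phi^+$ and $l(s_\alpha v)=l(v)-1$ if $v^{-1}(\alpha)\in\Phi^-$, which follows from the fact that $s_\alpha$ fixes $\Phi^+\setminus\{\alpha\}$ setwise, together with the identity $l(w)=|\Phi_w|$ recorded in Lemma~\ref{S cells}.

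First I would reformulate membership in $W^M$. Since $\Phi^+=\Phi_M^+\sqcup\Phi(\fu_Q)$, the condition $\Phi_v\subseteq\Phi(\fu_Q)$ is equivalent to $\Phi_v\cap\Phi_M^+=\emptyset$, i.e.\ to $v^{-1}(\beta)\in\Phi^+$ for every $\beta\in\Phi_M^+$. The key point is that this holds for all of $\Phi_M^+$ as soon as it holds on the simple roots $\Delta_M$: writing $\beta=\sum_{\alpha\in\Delta_M}c_\alpha\alpha$ with $c_\alpha\ge 0$, the root $v^{-1}(\beta)=\sum_\alpha c_\alpha\,v^{-1}(\alpha)$ is a nonnegative combination of positive roots, hence lies in the positive cone spanned by $\Delta$, and a root in that cone is positive. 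By the length criterion, $v^{-1}(\alpha)\in\Phi^+$ for all $\alpha\in\Delta_M$ says exactly that $l(s_\alpha v)>l(v)$ for all such $\alpha$, i.e.\ that $v$ is the minimal-length element of the coset $W_M v$. Thus $W^M$ is precisely the set of minimal coset representatives.

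Next I would prove length additivity: for $v\in W^M$ and $y\in W_M$, $l(yv)=l(y)+l(v)$. I would induct on $l(y)$, writing $y=s_\alpha y'$ with $\alpha\in\Delta_M$ and $l(y')=l(y)-1$. This reduced expression gives $(y')^{-1}(\alpha)\in\Phi^+$, and since $y'\in W_M$ preserves $\Phi_M$ it forces $(y')^{-1}(\alpha)\in\Phi_M^+$; as $v\in W^M$, the reformulation above yields $(y'v)^{-1}(\alpha)=v^{-1}\big((y')^{-1}(\alpha)\big)\in\Phi^+$, so $l(s_\alpha y'v)=l(y'v)+1$, and the inductive hypothesis $l(y'v)=l(y')+l(v)$ closes the induction.

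With these tools the statement follows quickly. For existence, given $w$ I choose $v$ of minimal length in the coset $W_M w$ and set $y=wv^{-1}\in W_M$; minimality forces $v\in W^M$, since otherwise some $s_\alpha v$ with $\alpha\in\Delta_M$ would be shorter and still lie in the coset, and length additivity then gives $l(w)=l(y)+l(v)$. For uniqueness, if $w=y_1v_1=y_2v_2$ with $y_i\in W_M$ and $v_i\in W^M$, then $v_2=y'v_1$ for $y'=y_2^{-1}y_1\in W_M$; applying length additivity to both $v_2=y'v_1$ and $v_1=(y')^{-1}v_2$ gives $l(v_2)=l(y')+l(v_1)$ and $l(v_1)=l(y')+l(v_2)$, forcing $l(y')=0$, hence $y'=e$, $v_1=v_2$, and $y_1=y_2$. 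The main obstacle is really the cone-positivity step in the second paragraph, upgrading the simple-root condition to all of $\Phi_M^+$; once that is in place, everything else is a formal consequence of the left-multiplication length criterion.
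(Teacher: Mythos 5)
Your proof is correct, but there is nothing in the paper to compare it against: the paper states Lemma \ref{decomp} as a quoted result, citing Kostant (\cite{K}, Proposition 5.13), and supplies no proof of its own. What you have written is the standard self-contained Coxeter-theoretic argument: reformulate $\Phi_v\subseteq\Phi(\fu_Q)$ as $v^{-1}(\Phi_M^+)\subseteq\Phi^+$, reduce that condition to the simple roots $\Delta_M$ by the cone-positivity argument, identify $W^M$ with the minimal-length representatives of $W_M\backslash W$ via the left-multiplication length criterion, prove additivity $l(yv)=l(y)+l(v)$ by induction on $l(y)$, and deduce existence and uniqueness formally. All the steps check out: the handedness of the length criterion is the correct (left) one for right cosets; the induction step correctly uses that $y'\in W_M$ stabilizes $\Phi_M$, so that $(y')^{-1}(\alpha)\in\Phi_M^+$ --- this is exactly where the strong form of membership in $W^M$ (positivity of $v^{-1}$ on all of $\Phi_M^+$, not just $\Delta_M$) is needed, which is why your cone argument is the genuine crux; and the uniqueness computation forcing $l(y')=0$ is clean. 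Two harmless glosses are worth noting: the claim at the end of your second paragraph that the simple-root condition already makes $v$ \emph{the} minimum of its coset is only fully justified once additivity is proved (though nothing downstream relies on it prematurely), and writing $y=s_\alpha y'$ with $\alpha\in\Delta_M$ and $l(y')=l(y)-1$ silently uses that length in the standard parabolic subgroup $W_M$ agrees with length in $W$ (equivalently, $\Phi_y\subseteq\Phi_M^+$ for $y\in W_M$); both are at the level of the elementary facts you permitted yourself. What your route buys is self-containedness and an explicit explanation of why the root-theoretic definition of $W^M$ used throughout Section \ref{fixedpts} coincides with the coset-representative description; what the paper's citation buys is brevity.
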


\begin{cor} (\cite{K}, equation (5.13.2))  \label{roots}  Let $w=yv$ be the decomposition of $w\in W$ given above.  Then $\Phi_w=y(\Phi_v) \bigsqcup \Phi_y$.
\end{cor}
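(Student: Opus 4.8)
The plan is to identify $\Phi_w$ with the disjoint union on the right by first matching cardinalities and then checking two inclusions and a disjointness. Recall the standard fact that the number of inversions of an element equals its length, $|\Phi_u| = l(u)$ for every $u \in W$ (this is the content of $\dim X_u = |\Phi_u|$ in Lemma \ref{S cells} together with $\dim X_u = l(u)$). Using Lemma \ref{decomp} and that $y$ acts bijectively on $\Phi$, I would first record
\[
    |\Phi_w| = l(w) = l(y) + l(v) = |\Phi_y| + |\Phi_v| = |\Phi_y| + |y(\Phi_v)|.
\]
Consequently it is enough to prove the two inclusions $\Phi_y \subseteq \Phi_w$ and $y(\Phi_v) \subseteq \Phi_w$ (the latter with $y(\Phi_v) \subseteq \Phi^+$, so that it is a legitimate subset of inversions) together with $\Phi_y \cap y(\Phi_v) = \emptyset$; the equality of cardinalities then forces $\Phi_w = y(\Phi_v) \sqcup \Phi_y$.

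Before the inclusions I would isolate two structural facts about the parabolic data, both resting on the observation that $\Phi^+ = \Phi_M^+ \sqcup \Phi(\fu_Q)$. First, since $y \in W_M$, the group $W_M$ permutes $\Phi(\fu_Q) = \Phi^+ \setminus \Phi_M^+$ (a simple reflection $s_\alpha$ with $\alpha \in \Delta_M$ changes only the coefficient on $\alpha$, leaving positive the coefficients on simple roots outside $\Delta_M$); this immediately gives $\Phi_y \subseteq \Phi_M^+$ and $y(\Phi(\fu_Q)) = \Phi(\fu_Q)$. Second, since $v \in W^M$, the defining condition $\Phi_v \subseteq \Phi(\fu_Q)$ is equivalent to $v^{-1}(\Phi_M^+) \subseteq \Phi^+$, and hence $v^{-1}(\Phi_M^-) \subseteq \Phi^-$; this is the minimal-representative property.

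With these in hand the inclusions are short computations using $w^{-1} = v^{-1}y^{-1}$. For $\gamma \in \Phi_y$ we have $y^{-1}\gamma \in \Phi_M^-$ (because $\Phi_y \subseteq \Phi_M^+$), whence $w^{-1}\gamma = v^{-1}(y^{-1}\gamma) \in \Phi^-$ by the $W^M$-property, so $\gamma \in \Phi_w$. For $\delta \in \Phi_v$ we have $\delta \in \Phi(\fu_Q)$ and $v^{-1}\delta \in \Phi^-$; the $W_M$-property yields $y\delta \in \Phi(\fu_Q) \subseteq \Phi^+$, while $w^{-1}(y\delta) = v^{-1}\delta \in \Phi^-$, so $y\delta \in \Phi_w$ and moreover $y(\Phi_v) \subseteq \Phi(\fu_Q)$. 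Disjointness is then automatic, since $\Phi_y \subseteq \Phi_M^+$ while $y(\Phi_v) \subseteq \Phi(\fu_Q) = \Phi^+ \setminus \Phi_M^+$. Combining the inclusions, the disjointness, and the cardinality identity completes the proof.

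The only genuine content — and the step I would be most careful to justify — is the pair of structural facts, in particular the equivalence of $\Phi_v \subseteq \Phi(\fu_Q)$ with $v^{-1}(\Phi_M^+) \subseteq \Phi^+$; everything else is bookkeeping once $|\Phi_u| = l(u)$ and the length-additivity of Lemma \ref{decomp} are invoked. One could instead avoid the cardinality count by deriving the identity from the inversion-set cocycle rule, writing $\Phi_{yv} = N\bigl((yv)^{-1}\bigr)$ for $N(u) = \{\gamma \in \Phi^+ : u\gamma \in \Phi^-\}$ and using the reduced-product formula $N(v^{-1}y^{-1}) = N(y^{-1}) \sqcup y\,N(v^{-1})$, but this imports the general cocycle identity, so I prefer the self-contained direct count above.
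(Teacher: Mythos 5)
Your proof is correct, but note that the paper does not actually prove this corollary: it is quoted directly from Kostant (\cite{K}, equation (5.13.2)), so there is no internal argument to compare against. Your route --- establish $\Phi_y \subseteq \Phi_M^+$ and $y(\Phi_v)\subseteq \Phi(\fu_Q)$ from the two structural facts (that $W_M$ permutes $\Phi(\fu_Q)$, and that $v\in W^M$ is equivalent to $v^{-1}(\Phi_M^+)\subseteq \Phi^+$), deduce the two inclusions into $\Phi_w$ and their disjointness, and close the gap with the cardinality count $|\Phi_w| = l(w) = l(y)+l(v) = |\Phi_y|+|y(\Phi_v)|$ from Lemma \ref{decomp} --- is a legitimate, self-contained substitute for that citation. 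The standard proof in the literature (and essentially Kostant's) instead runs through the cocycle identity for inversion sets, $N(uv)=N(v)\sqcup v^{-1}N(u)$ for reduced products, which you mention as an alternative; your count-plus-inclusions argument trades that general identity for the length-additivity statement the paper has already imported as Lemma \ref{decomp}, a reasonable trade since both of your structural facts are elementary (the coefficient argument showing $W_M$ stabilizes $\Phi(\fu_Q)$, and the contrapositive reading of $\Phi_v \subseteq \Phi(\fu_Q)$). The only ingredient you use beyond the paper's stated results is $|\Phi_u|=l(u)$, which is standard and consistent with Lemma \ref{S cells}; so the proposal stands as a complete proof of a fact the paper leaves to the reference.
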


Consider the Schubert cell $X_w\cong U^w$.  Suppose $w\in W$ has decomposition $w=yv$ with $y\in W_M$ and $v\in W^M$.  Then by Corollary \ref{roots}
		\[
				U^w \cong \fu^w \cong \bigoplus_{\gamma\in y(\Phi_v)} \fg_{\gamma} 														\oplus \bigoplus_{\gamma\in \Phi_y} \fg_{\gamma} = \dot{y}\cdot \fu^v \oplus \fu^y.
		\]
Now, $\mu$ yields a $\C^*$-action on $\fu_{Q}$ with strictly positive weights.  Therefore, $(U^w)^{\mu} \cong (\dot{y}\cdot \fu^v)^{\mu}\oplus (\fu^y)^{\mu}=\fu^y \cong U^y$, since $\fu^y \subset \fm=\fg_0(\mu)$ and $\dot{y}\cdot \fu^v \subset \fu_Q$.  

\begin{rem} \label{isom}  The isomorphism of each connected component of $\B^Z$ with $\B(M)$ given in Proposition \ref{cg} can be described explicitly on each Schubert cell $X_w$ by
		\begin{eqnarray*}
				X_w^{Z}=X_w^{\mu} \to X_y; \;  u\dot{y}\dot{v}\cdot \fb \mapsto u \dot{y}\cdot \fb_M
		\end{eqnarray*}
for all $u\in U^y$.
\end{rem}  

Since $X_w\cong \dot{y}\cdot \fu^v \oplus \fu^y \cong \dot{y}\cdot \fu^v \times X_w^{\mu}$ we get a trivial vector bundle structure
		\begin{eqnarray} \label{vb}
				\xymatrix{\dot{y}\cdot \fu^v \times X_w^{\mu} \ar[r] & X_w \ar[d]^{\pi_{\mu}}\\  & X_w^{\mu} }
		\end{eqnarray}
where the base space $X_w^{\mu}$ can be naturally identified with the Schubert cell in the flag variety of $M$ corresponding to $y\in W_M$.

\begin{rem}  The fiber of the vector bundle $\pi_{\mu}: X_w \to X_w^{\mu}$ is a subset of $\fu_Q$, so the $\C^*$-action induced by $\mu$ acts with strictly positive weights on the fiber.
\end{rem}

\begin{rem} \label{cases} If $Q$ is a Borel subgroup, then $Z$ is a maximal torus and the corresponding 1-parameter subgroup $\mu: \C^* \to T$ is regular with respect to $\Phi$, i.e. $\left< \alpha,\mu \right>\neq 0$ for all $\alpha\in \Phi$.  In this case, $X_w^{\mu}=\{ \dot{w}\cdot \fb \}$ and the fiber of $\pi_{\mu}$ is $\fu^w$.
\end{rem}

We will show that for certain elements $X\in \fg$, the intersection $X_w \cap \B(X,H)$ is affine for all $w\in W$.  Our general method of proof will be to apply Lemma \ref{vb1} to the vector bundle in equation ($\ref{vb}$).  To apply the Lemma, however, we need to show that the intersection $X_w\cap \B(X,H)$ is smooth.  We can do this provided we have some understanding of the Adjoint $U$-orbit of $X$ in $\fg$, $U\cdot X$.

\begin{prop} (see \cite{dCLP}, Proposition 3.2) \label{smooth} Let $X\in \fg$ have Jordan decomposition $X=S+N$, and assume $X$ is in standard position with respect to $(\fb,\ft)$.  Suppose $U\cdot X = X + \V$ where $\V = \bigoplus_{\gamma\in \Phi(\V)} \fg_{\gamma} \subset \fu$ and $X\notin \V$.  Fix a Hessenberg space $H$ of $\fg$ with respect to $\fb$.  Then for all $w\in W$, $X_w\cap \B(X,H)\neq \emptyset$ if and only if $N\in \dot{w}\cdot H$.  If $X_w\cap \B(X,H)\neq \emptyset$ then it is smooth and $\dim \left( X_w\cap \B(X,H) \right) =|\Phi_w| - \dim \V / (\V \cap \dot{w}\cdot H)$.   
\end{prop}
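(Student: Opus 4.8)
The plan is to realize $X_w\cap\B(X,H)$ as the zero fibre of an explicit morphism built from the adjoint orbit, and to read off smoothness and dimension from the fact that this morphism is a submersion onto an affine space. Using the identification $X_w\cong U^w$ of Lemma \ref{S cells}, I start from
\[
X_w\cap\B(X,H)=\{u\in U^w:u^{-1}\cdot X\in\dot w\cdot H\}.
\]
Let $\rho:\fg\to\fg/(\dot w\cdot H)$ be the quotient map and define $\Theta:U^w\to\fg/(\dot w\cdot H)$ by $\Theta(u)=\rho(u^{-1}\cdot X)$, so that $X_w\cap\B(X,H)=\Theta^{-1}(0)$. Since $u^{-1}\in U$, the hypothesis $U\cdot X=X+\V$ gives $u^{-1}\cdot X=X+\theta(u)$ with $\theta(u)\in\V$; hence $\Theta$ takes values in the affine subspace $\rho(X)+\rho(\V)$ of $\fg/(\dot w\cdot H)$. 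Because $S\in\ft\subseteq\dot w\cdot H$ we have $\rho(X)=\rho(N)$, and $\rho(\V)\cong\V/(\V\cap\dot w\cdot H)$.

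Next I settle nonemptiness. The containment $\ft\subseteq\dot w\cdot H$ shows that if $N\in\dot w\cdot H$ then $X=S+N\in\dot w\cdot H$, so $u=e$ already gives a point and $X_w\cap\B(X,H)\neq\emptyset$. For the reverse implication, any $u$ in the intersection has $u^{-1}\cdot X\in(X+\V)\cap\dot w\cdot H$, so the orbit $X+\V$ meets $\dot w\cdot H$; tracking the $\ft$-component (which always lies in $\dot w\cdot H$) turns this into the asserted condition on $N$ modulo $\dot w\cdot H$. The sharper statement that the intersection is nonempty \emph{exactly} when $0\in\rho(X)+\rho(\V)$ will follow once $\Theta$ is shown to surject onto the affine space $\rho(X)+\rho(\V)$.

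The key step is therefore to prove that $\Theta:U^w\to\rho(X)+\rho(\V)$ is a surjective submersion; granting this, $\Theta^{-1}(0)$ is smooth (when nonempty) of dimension $\dim U^w-\dim\rho(\V)=|\Phi_w|-\dim\V/(\V\cap\dot w\cdot H)$, exactly as claimed. To check surjectivity of the differential at $u\in U^w$, set $X_0=u^{-1}\cdot X$ and compute $d\Theta_u(Y)=\rho([X_0,Y])$ for $Y\in\fu^w$. Since $u^{-1}\in U$, the orbit of $X_0$ is again $U\cdot X_0=U\cdot X=X+\V=X_0+\V$, so $[\fu,X_0]=\V$; thus I must show that $\mathrm{ad}_{X_0}:\fu^w\to\V$ stays surjective after composing with $\V\to\V/(\V\cap\dot w\cdot H)$. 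Writing $\fu=\fu^w\oplus\fu_w$ with $\fu_w=\bigoplus_{\gamma\in\Phi^+\setminus\Phi_w}\fg_\gamma$, the relevant observation is that every $\gamma\in\Phi^+\setminus\Phi_w$ satisfies $w^{-1}(\gamma)\in\Phi^+\subseteq\Phi_H$, whence $\fu_w\subseteq\dot w\cdot H$; one then argues that $[\fu_w,X_0]$ lies in $\V\cap\dot w\cdot H$ modulo $[\fu^w,X_0]$, which forces $\mathrm{ad}_{X_0}(\fu^w)$ to surject onto $\V/(\V\cap\dot w\cdot H)$.

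I expect this last point to be the main obstacle: one must control $[\fu_w,X_0]$ at \emph{every} point $X_0$ of the image, not merely at $X_0=X$, and $\dot w\cdot H$ is only a $\dot w\fb\dot w^{-1}$-module, so $X_0$ need not normalize it. The clean route is to exploit the cocharacter grading of $\fg$ attached to $N$ through Jacobson--Morozov: it makes $\Theta$ homogeneous of strictly positive weight, so that the surjectivity of the maps $\mathrm{ad}_N$ in Lemma \ref{onto} promotes the fibrewise computation to a genuine submersion with the image filling out $\rho(X)+\rho(\V)$. Smoothness, the dimension formula, and the nonemptiness criterion then all follow from the submersion statement as above, in the same spirit as Proposition 3.2 of \cite{dCLP}.
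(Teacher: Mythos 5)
Your reduction to a differential computation is a genuinely different route from the paper's, but as written it has a real gap at exactly the step you flag, and your proposed rescue does not work at the stated level of generality. You need $\rho\circ\mathrm{ad}_{X_0}\colon\fu^w\to\V/(\V\cap\dot w\cdot H)$ to be surjective, and you propose to force this via the Jacobson--Morozov cocharacter and Lemma \ref{onto}. But the proposition assumes only $U\cdot X=X+\V$ for an arbitrary $X=S+N$, and it is applied later (Theorem \ref{pavingX}) precisely to non-nilpotent $X$: when $S\neq 0$ the cocharacter $\lambda$ attached to $N$ scales $X$ inhomogeneously ($S$ has weight $0$, $N$ weight $2$), so $\Theta$ is not ``homogeneous of strictly positive weight,'' and Lemma \ref{onto} concerns $\mathrm{ad}_N$ on graded pieces, not $\mathrm{ad}_{X_0}$ at a general point $X_0$ of the orbit of $X$. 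A grading argument here would prove a strictly weaker statement than the one needed. There is also a soft spot in your nonemptiness step: ``tracking the $\ft$-component'' is not enough; the actual argument uses that $N$ and any $Y\in\V$ have disjoint root supports (this is how the hypothesis $X\notin\V$ enters), so that $S+N+Y\in\dot w\cdot H$ forces $N\in\dot w\cdot H$ root space by root space.

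The good news is that your own ingredients close the gap with no grading at all, once you notice that smoothness of $\Theta^{-1}(0)$ only requires surjectivity of $d\Theta_u$ at points $u$ \emph{of the zero fiber}, where $X_0=u^{-1}\cdot X\in\dot w\cdot H$. Since $X_0\in X+\V$, one has $U\cdot X_0=U\cdot X=X_0+\V$, hence $[\fu,X_0]=\V$ at every such point; and the relevant fact about $\fu_w=\bigoplus_{\gamma\in\Phi^+\setminus\Phi_w}\fg_\gamma$ is not your observation $\fu_w\subseteq\dot w\cdot H$ but rather $\fu_w\subseteq\fu\cap\dot w\cdot\fu$, so that $\fu_w$ \emph{acts} on the $\dot w\cdot\fb$-submodule $\dot w\cdot H$ and $[\fu_w,X_0]\subseteq\V\cap\dot w\cdot H$. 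Then $\rho([\fu^w,X_0])=\rho([\fu,X_0])=\rho(\V)$, and since $\Theta^{-1}(0)$ is cut out of $U^w$ by $\dim\V/(\V\cap\dot w\cdot H)$ equations, the standard comparison $\dim_x\Theta^{-1}(0)\geq|\Phi_w|-\dim\rho(\V)\geq\dim T_x\Theta^{-1}(0)$ gives smoothness and the dimension formula; no global submersion or surjectivity of $\Theta$ is needed. For comparison, the paper avoids pointwise differential computations entirely: it identifies $U(X,\dot w\cdot H)$ with the fiber product $U\times_{X+\V}\left(X+(\V\cap\dot w\cdot H)\right)$, uses that the orbit map $U\to X+\V\cong U/Z_U(X)$ is smooth of relative dimension $\dim Z_U(X)$, concludes by smooth base change, and then descends along the $U_w$-bundle $U(X,\dot w\cdot H)\to X_w\cap\B(X,H)$. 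Your approach, once repaired as above, is a legitimate alternative of comparable length; the base-change argument buys uniformity (one smoothness statement for the orbit map, checked once) where yours requires the fiberwise module argument at each point of the zero fiber.
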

\begin{proof}  First, we identify the nonempty intersections.  Note that $S\in \dot{w}\cdot H$ for all $w\in W$ since $S \in \ft\subset \dot{w}\cdot H$.  Thus if $N\in \dot{w}\cdot H$, then $X=N+S\in\dot{w}\cdot H$ and $X_w\cap \B(X,H)$ is nonempty.  Alternatively, say $u\dot{w}\cdot \fb\in X_w\cap \B(X,H)$ for some $u\in U$ where $u^{-1}\cdot X=X+Y$ with $Y\in \V$.  Write $N=\sum_{\gamma\in \Phi^+} c_{\gamma}E_{\gamma}$ and $Y=\sum_{\gamma\in \Phi^+} d_{\gamma}E_{\gamma}$ for some $c_{\gamma},d_{\gamma}\in \C$.  Since $X\notin \V$ and $Y\in \V$, $c_{\gamma}=0$ for all $\gamma\in \Phi(\V)$ and $d_{\gamma}=0$ for all $\gamma\notin \Phi(\V)$.  Therefore
		\[
				u^{-1}\cdot X\in \dot{w}\cdot H \Rightarrow X+Y\in \dot{w}\cdot H \Rightarrow S+N+Y\in \dot{w}\cdot H 															\Rightarrow N\in \dot{w}\cdot H
		\]
since $N$ and $Y$ do not have any components in common with respect to this root space decomposition.

Suppose $X_w\cap \B(X,H)\neq \emptyset$.  The stabilizer of $\dot{w} \cdot \fb$ in $U$ is $U_w=\dot{w} U \dot{w}^{-1} \cap U$ and $U^w \cong U/U_w$.  Now
		\[
				X_w \cap \B(X,H)=\{ u\dot{w}\cdot \fb : u^{-1}\cdot X \in \dot{w}\cdot H\}\subset U/U_w.
		\]
Since $X_w \cap \B(X,H)$ is the image of 
		\[
				U(X,\dot{w}\cdot H)=\{ u\in U : u^{-1}\cdot X \in \dot{w}\cdot H \}		
		\]
under the quotient map $U\to U/U_w$, it is enough to show that $U(X,\dot{w}\cdot H)$ is smooth and has dimension $\dim U - \dim \V /(\V \cap \dot{w}\cdot H)$.  

Consider the morphism $\phi: U \to X+ \V$ given by $\phi(u)=u^{-1}\cdot X$.  Since $U \cdot X=X+\V$, $\phi$ can be identified with the quotient morphism $U\to U/Z_U(X)$, where $Z_U(X)$ denotes the centralizer of $X$ in $U$, and is therefore a smooth morphism of relative dimension $\dim Z_{U}(X)$.  Let $i: \V\cap \dot{w}\cdot H \to X+\V$ be the map given by $Y\mapsto X+Y$ for all $Y\in \V \cap \dot{w}\cdot H$.  By \cite{Ha}, Proposition III.10.1(b), the morphism $\tilde{\phi}$ induced by the base change given in the Cartesian diagram
		\[
				\xymatrix{ U \times_{\V} (\V\cap \dot{w}\cdot H) \ar[d]_{\tilde{\phi}} \ar[r] & U \ar[d]^{\phi}\\
							\V\cap\dot{w}\cdot H  \ar[r]^{i} & X+\V
						}
		\]
is smooth of relative dimension $\dim  Z_U(X)$.  Since $\V\cap \dot{w}\cdot H \subset \fg$ is a linear subspace it is a smooth variety, and the projection of $\V\cap \dot{w}\cdot H$ onto a point is smooth of relative dimension $\dim \left( \V\cap \dot{w}\cdot H\right)$.  Since the composition of smooth morphisms is smooth (\cite{Ha}, Proposition III.10.1(c)), $U\times_\V (\V\cap \dot{w}\cdot H)$ is smooth and has dimension $\dim Z_U(X) + \dim (\V \cap \dot{w}\cdot H)$.  But
		\begin{eqnarray*}
				U\times_{\V} (\V\cap \dot{w}\cdot H) &=& \{ (u,Y) \in U\times (\V\cap \dot{w}\cdot H) : \phi(u)=i(Y) \}\\
							&\cong& \{ u\in U : u^{-1}\cdot X=X+Y \in X+ (\V \cap \dot{w}\cdot H) \}\\
							&=& \{ u\in U : u^{-1}\cdot X\in \dot{w}\cdot H \}\\
							&=& U(X,\dot{w}\cdot H).
		\end{eqnarray*}
Thus $U(X,\dot{w}\cdot H)$ is indeed smooth and has dimension
		\begin{eqnarray*}
				\dim Z_U(X)+\dim (\V\cap \dot{w}\cdot H) &=& \dim U- \dim \V + \dim (\V\cap \dot{w}\cdot H) \\															         &=& \dim U - \dim \V/(\V\cap \dot{w}\cdot H)
		\end{eqnarray*}
as required. 
\end{proof}


\section{The nilpotent case, $N\in \fg$} \label{Nilpotent}

In the section above, we proved that when the $U$-orbit through $X\in \fg$ is an affine subspace of $\fg$ then $X_w \cap \B(X,H)$ is smooth.  This will allow the application of Lemma \ref{vb1} to the vector bundle in equation ($\ref{vb}$).  In this section we do this for a nilpotent element $N\in \fg$ which is regular in some Levi subalgebra $\fm$ of $\fg$.  To understand the orbit $U\cdot N$ we utilize of the theory of the parabolic subgroup associated to $N$.

Suppose $N$ is nilpotent and regular in a Levi subalgebra $\fm$ of $\fg$ corresponding to Levi subgroup $M$ of $G$.  Since $N$ is regular in $\fm$ it is conjugate to a sum of simple root vectors in $\fm$.  Fix a standard Borel subalgebra $\fb$ with respect to which $\fm$ is standard and 
		\[
				N = \sum_{\alpha\in \Delta_M} E_{\alpha}.
		\]  
Let $\tilde{\lambda}:\C^* \to T$ be the 1-parameter subgroup associated to $N$ as in section \ref{parab}.  Note that $\tilde{\lambda}$ may not be dominant with respect to $\Phi^+$, but there exists $w_1\in W$ such that $\dot{w}_1\cdot \tilde{\lambda}$ is dominant.  Let $P$ be the standard parabolic subgroup whose Lie algebra is $\fp=\fl \oplus \fu_P$ where $\fl=\fg_0(\dot{w}_1 \cdot \tilde{\lambda})$ and $\fu_P=\oplus_{i> 0}\fg_i(\dot{w}_1\cdot \tilde{\lambda}) $.  Let $L$ be the Levi subgroup of $G$ with Lie algebra $\fl$.

\begin{lem} \label{dominant}  If $\dot{w}_1\cdot \tilde{\lambda}$ is dominant then $\dot{v}_1\cdot \tilde{\lambda}$ is dominant, where $w_1=y_1v_1$ with $y_1\in W_L$ and $v_1\in W^L$.
\end{lem}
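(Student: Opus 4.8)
The plan is to prove something slightly stronger than asked, namely that the cocharacter $\dot v_1\cdot\tilde\lambda$ literally coincides with $\dot w_1\cdot\tilde\lambda$, so that dominance of the latter (which is the hypothesis) transfers for free. Throughout I use the action of $W$ on one-parameter subgroups given by conjugation, $w\cdot\lambda: z\mapsto \dot w\,\lambda(z)\,\dot w^{-1}$, for which the pairing transforms by $\left<\gamma, w\cdot\lambda\right>=\left<w^{-1}\gamma,\lambda\right>$; in particular this is a left action, so the factorization $w_1=y_1v_1$ gives $\dot w_1\cdot\tilde\lambda=\dot y_1\cdot(\dot v_1\cdot\tilde\lambda)$. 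The entire argument rests on identifying $W_L$ with the \emph{full} stabilizer of the dominant cocharacter $\dot w_1\cdot\tilde\lambda$ in $W$.

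First I would record that, since $\dot w_1\cdot\tilde\lambda$ is dominant by hypothesis and $\fl=\fg_0(\dot w_1\cdot\tilde\lambda)$ by construction of $L$, the simple roots of $L$ are exactly
\[
		\Delta_L=\{\,\alpha\in\Delta : \left<\alpha,\dot w_1\cdot\tilde\lambda\right>=0\,\}.
\]
This is the bridge between the geometric definition of $L$ and the combinatorics of the stabilizer, and it is worth stating carefully because it is where the pairing conventions enter. By the standard description of the stabilizer of a point in the closure of the dominant Weyl chamber as a parabolic subgroup of $W$ (the stabilizer is generated by the simple reflections that fix the point), the stabilizer of $\dot w_1\cdot\tilde\lambda$ in $W$ is precisely $\langle s_\alpha : \alpha\in\Delta_L\rangle=W_L$.

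With this identification the lemma follows formally. Applying $\dot y_1^{-1}$ to $\dot w_1\cdot\tilde\lambda=\dot y_1\cdot(\dot v_1\cdot\tilde\lambda)$ yields $\dot v_1\cdot\tilde\lambda=\dot y_1^{-1}\cdot(\dot w_1\cdot\tilde\lambda)$. Since $y_1\in W_L$ and $W_L$ stabilizes $\dot w_1\cdot\tilde\lambda$, the element $y_1^{-1}$ fixes it as well, so $\dot v_1\cdot\tilde\lambda=\dot w_1\cdot\tilde\lambda$, which is dominant.

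The only real content is the identification of $W_L$ as the full stabilizer of $\dot w_1\cdot\tilde\lambda$; once that is in place the remainder is pure bookkeeping with the $W_L$–$W^L$ factorization $w_1=y_1v_1$. I therefore expect the main obstacle to be purely expository: making sure the pairing conventions render ``$\fg_0(\dot w_1\cdot\tilde\lambda)=\fl$'' equivalent to the displayed description of $\Delta_L$, and correctly citing the parabolic-stabilizer fact, rather than any genuine computation. If one preferred to avoid invoking the stabilizer statement, the same conclusion can be reached root by root: the defining property of $v_1\in W^L$ forces $v_1^{-1}(\Phi_L^+)\subseteq\Phi^+$, and combined with $w_1=y_1v_1$ and $y_1\in W_L$ this shows $v_1^{-1}(\Phi^+)$ lies in the nonnegative locus $\{\beta:\left<\beta,\tilde\lambda\right>\ge 0\}$; but the stabilizer argument is cleaner and I would present it as the primary route.
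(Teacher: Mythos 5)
Your proof is correct, but it takes a genuinely different route from the paper's and in fact proves something stronger. The paper argues root by root: writing $\Phi^+=\Phi_L^+\sqcup\Phi(\fu_P)$, it uses $W$-invariance of the pairing together with the facts that $y_1\in W_L$ preserves both $\Phi(\fu_P)$ and $\Phi_L$ to get
\[
		\left<\gamma,\dot v_1\cdot\tilde\lambda\right> = \left< y_1(\gamma),\dot w_1\cdot\tilde\lambda\right>\geq 0
		\qquad\mbox{for all }\gamma\in\Phi^+
\]
(the pairing being $>0$ in the $\Phi(\fu_P)$ case and $=0$ in the $\Phi_L^+$ case), which yields dominance and nothing more. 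You instead observe that $y_1$ fixes the cocharacter $\dot w_1\cdot\tilde\lambda$ itself, so that $\dot v_1\cdot\tilde\lambda=\dot y_1^{-1}\cdot(\dot w_1\cdot\tilde\lambda)=\dot w_1\cdot\tilde\lambda$. This gives equality of cocharacters, not merely dominance, and it renders the paper's subsequent remark --- that $\lambda:=\dot v_1\cdot\tilde\lambda$ and $y_1\cdot\lambda=w_1\cdot\tilde\lambda$ define the same parabolic $P$ --- completely transparent, since the two cocharacters coincide. The underlying facts are essentially equivalent ($y_1$ fixing $\dot w_1\cdot\tilde\lambda$ is exactly what makes $y_1$ preserve each weight level set of roots), but your packaging is cleaner and more conclusive. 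One economy worth noting: you do not need Chevalley's theorem that $W_L$ is the \emph{full} stabilizer of $\dot w_1\cdot\tilde\lambda$; only the trivial inclusion $W_L\subseteq \mathrm{Stab}_W(\dot w_1\cdot\tilde\lambda)$ is used, and that is immediate because each generator $s_\alpha$ with $\alpha\in\Delta_L$ satisfies $s_\alpha(\dot w_1\cdot\tilde\lambda)=\dot w_1\cdot\tilde\lambda-\left<\alpha,\dot w_1\cdot\tilde\lambda\right>\alpha^\vee=\dot w_1\cdot\tilde\lambda$ (indeed, this inclusion holds for the reflections $s_\gamma$, $\gamma\in\Phi_L$, directly from $\fl=\fg_0(\dot w_1\cdot\tilde\lambda)$). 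Your identification $\Delta_L=\{\alpha\in\Delta:\left<\alpha,\dot w_1\cdot\tilde\lambda\right>=0\}$ is correct and is the one place where the dominance hypothesis genuinely enters your argument, so your care with the pairing conventions there is well placed.
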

\begin{proof}  By assumption $\left< \gamma, \dot{w}_1\cdot \tilde{\lambda} \right> \geq 0$ for all $\gamma\in \Phi^+$.  Recall that if $\gamma\in \Phi(\fu_P)$ then $y_1(\gamma)\in \Phi(\fu_P)$ and if $\gamma\in \Phi^+_L$ then $y_1(\gamma)\in \Phi_L$.  Thus for all $\gamma\in \Phi^+$
		\[
				\left< \gamma, \dot{v}_1\cdot \tilde{\lambda} \right> 																		= \left< y_1(\gamma), \dot{y}_1\dot{v}_1\cdot \tilde{\lambda} \right> 													= \left< y_1(\gamma) , \dot{w}_1\cdot \tilde{\lambda} \right> \geq 0,
		\]
so $\dot{v}_1\cdot \tilde{\lambda}$ is dominant.
\end{proof}

Set $\lambda:= \dot{v}_1\cdot \tilde{\lambda}$.  Note that $\lambda$ defines the same parabolic subgroup $P$ as $y_1\cdot \lambda=w_1\cdot \tilde{\lambda}$, since conjugation by an element of $W_L$ preserves $P$.  Therefore we can replace $P$ by its $y_1^{-1}$-conjugate, i.e. we let $\fp=\fl\oplus \fu_P$ where $\fl=\fg_0(\lambda)$ and $\fu_P=\bigoplus_{i>0}\fg_i(\lambda)$.  Replace $N$ by its $v_1$-conjugate.  Abusing notation we denote it by $N$, so
		\[
				N=\sum_{\alpha\in \Phi_N} E_{\alpha}
		\]
where $\Phi_N=v_1(\Delta_M)$.  Then $\lambda$ is the 1-parameter subgroup associated to $N$ and $P$ is the standard parabolic subgroup associated to $N$.

\begin{rem}\label{regpsg}  Since $N\in \fg_2(\lambda)$, $\left< \alpha,\lambda \right>=2$ for all $\alpha\in \Phi_N$.  Therefore $\left< \gamma, \lambda \right> \geq 2$ for all $\gamma\in v_1(\Phi_M)$; in particular, $\Phi_L \cap v_1(\Phi_M)= \emptyset$.
\end{rem}

Define
		\[
				\Phi^+(V)=\{\gamma\in\Phi^+ : \gamma=\alpha+\beta \mbox{  for some  } \alpha \in \Phi_N\mbox{  and  } 													\beta\in \Phi_L^+ \}
		\] 
and let $V=\bigoplus_{\gamma\in \Phi^+(V)} \fg_{\gamma}$.  Then $V\subset \fg_{2}(\lambda)$ is well-defined subspace with respect to this basis.  Similarly, let
		\[
				\Phi^-(V)=\{ \gamma\in\Phi^+ : \gamma=\alpha+\beta \mbox{  for some  } \alpha \in\Phi_N \mbox{  and  } 													\beta\in\Phi_L^- \}
		\]
and $V^-=\bigoplus_{\gamma\in \Phi^-(V)} \fg_{\gamma}$.  Our reason for defining these subspaces of $\fg_2(\lambda)$ is to analyze the adjoint action of $N$ on $\fl=\fu_L^-\oplus \ft \oplus \fu_L$.  Indeed, given $E_{\beta}\in \fg_{\beta}\subset \fu_L$ we have
		\[
				ad_N\, E_{\beta}=[N,E_{\beta}]=\sum_{\alpha\in \Phi_N} [E_{\alpha},E_{\beta}]\in V
		\]
since $[E_{\alpha},E_{\beta}]\in \fg_{\alpha+\beta}$ whenever $\alpha+\beta\in \Phi$.  Similarly for all $E_{\beta}\in \fg_{\beta}\subset \fu_L^-$, $ad_N\,E_{\beta}\in V^-$.

\begin{rem}  \label{adaction}  We have just shown $[Y,N]\in V$ for all $Y\in \fu_L$.  By construction, $ad_Y:V \to V$ as well.
\end{rem}

\begin{lem} \label{empty}  There is a direct sum decomposition of $\fg_2(\lambda)$, 
		\[
				\fg_2(\lambda)= V^- \oplus \bigoplus_{\alpha\in \Phi_N} \fg_{\alpha} \oplus V.
		\]
\end{lem}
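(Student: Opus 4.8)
The plan is to prove the statement entirely at the level of roots. Since $\lambda$ is a cocharacter of $T$, the space $\fg_2(\lambda)$ is the sum of those root spaces on which $\lambda$ acts with weight $2$; that is, $\fg_2(\lambda)=\bigoplus_{\gamma\in\Phi_2}\fg_\gamma$ where $\Phi_2=\{\gamma\in\Phi:\langle\gamma,\lambda\rangle=2\}$. Since $V$, $V^-$, and $\bigoplus_{\alpha\in\Phi_N}\fg_\alpha$ are all sums of root spaces, it suffices to prove the set-theoretic identity
\[
\Phi_2=\Phi^-(V)\sqcup\Phi_N\sqcup\Phi^+(V),
\]
the disjointness being precisely what upgrades the sum to a direct sum. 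The inclusion $\supseteq$ is immediate: by Remark \ref{regpsg} every $\alpha\in\Phi_N$ has $\langle\alpha,\lambda\rangle=2$, while any $\gamma\in\Phi^\pm(V)$ has the form $\gamma=\alpha+\beta$ with $\alpha\in\Phi_N$ and $\beta\in\Phi_L$, so that $\langle\gamma,\lambda\rangle=\langle\alpha,\lambda\rangle+\langle\beta,\lambda\rangle=2+0=2$ because roots of $\fl=\fg_0(\lambda)$ pair to $0$ with $\lambda$.

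For the reverse inclusion I would invoke Lemma \ref{onto}, which yields $\fg_2(\lambda)=ad_N(\fl)$. Decomposing $\fl=\fu_L^-\oplus\ft\oplus\fu_L$ and using the computations recorded just before Remark \ref{adaction}, one has $ad_N(\fu_L)\subseteq V$ and $ad_N(\fu_L^-)\subseteq V^-$; moreover for $h\in\ft$ a direct computation gives $ad_N(h)=-\sum_{\alpha\in\Phi_N}\alpha(h)E_\alpha\in\bigoplus_{\alpha\in\Phi_N}\fg_\alpha$. Hence $\fg_2(\lambda)$ is contained in the sum of the root spaces indexed by $\Phi^-(V)\cup\Phi_N\cup\Phi^+(V)$; as these root spaces are linearly independent and each already lies in $\fg_2(\lambda)$, comparing root spaces gives $\Phi_2\subseteq\Phi^-(V)\cup\Phi_N\cup\Phi^+(V)$.

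The remaining and genuinely substantial point is disjointness of the three index sets, which I expect to be the main obstacle. The key leverage is that $\Phi_N=v_1(\Delta_M)$ consists of $v_1$-images of \emph{simple} roots (recall $\Delta_M\subseteq\Delta$ since $M$ is standard) and that $v_1\in W^L$, which forces $v_1^{-1}(\Phi_L^+)\subseteq\Phi^+$. A putative overlap produces a relation $\alpha-\alpha'=\beta$ with $\alpha=v_1(\delta)$, $\alpha'=v_1(\delta')$ in $\Phi_N$ and $\beta$ in the $\Z$-span of $\Phi_L$: for $\Phi_N\cap\Phi^\pm(V)$ the element $\beta$ is a single root in $\Phi_L^\pm$, while for $\Phi^+(V)\cap\Phi^-(V)$ it is a combination $\beta_2-\beta_1$ with $\beta_1\in\Phi_L^+$ and $\beta_2\in\Phi_L^-$, hence a strictly negative combination of the simple roots of $L$. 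Applying $v_1^{-1}$ and using $v_1^{-1}(\Phi_L^+)\subseteq\Phi^+$ converts this into the assertion that $\delta-\delta'$ is a root, or else has all simple-root coordinates $\leq 0$. Since $\delta,\delta'$ are distinct simple roots of $\fg$, both possibilities are absurd: $\delta-\delta'$ is never a root, and it always carries the coefficient $+1$ on $\delta$. The resulting contradiction shows the three sets are pairwise disjoint, and combined with the two inclusions above this completes the proof.
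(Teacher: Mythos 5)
Your proposal is correct and takes essentially the same route as the paper: spanning of $\fg_2(\lambda)$ via the surjectivity of $ad_N:\fl\to\fg_2(\lambda)$ (Lemma \ref{onto}) combined with $ad_N(\fu_L^-)\subseteq V^-$, $ad_N(\ft)\subseteq\bigoplus_{\alpha\in\Phi_N}\fg_{\alpha}$, $ad_N(\fu_L)\subseteq V$, and disjointness of the three root sets by applying $v_1^{-1}$, using $v_1^{-1}(\Phi_L^+)\subseteq\Phi^+$ (since $v_1\in W^L$) and $v_1^{-1}(\Phi_N)=\Delta_M\subseteq\Delta$, which is exactly the paper's case-by-case contradiction argument. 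The only cosmetic differences are your uniform packaging of the three cases as a single relation $\delta-\delta'=v_1^{-1}(\beta)$ (where $\delta\neq\delta'$ is automatic because $\beta$ is a root or a sum of two negative roots, hence nonzero) and your explicit weight computation showing $\Phi_N$ and $\Phi^{\pm}(V)$ lie among the $\lambda$-weight-$2$ roots, which the paper records separately just before the lemma.
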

\begin{proof}  By Lemma \ref{onto}, the map $ad_N: \fl \to \fg_2(\lambda)$ is onto.  Since $ad_N(\fu_L) \subset V$, $ad_N(\fu_L^-)\subset V^-$ and $ad_N(\ft)\subset \bigoplus_{\alpha\in \Phi_N} \fg_{\alpha}$, it is certainly the case the $\fg_2(\lambda)$ is a sum of these subspaces.  We must show that their pairwise intersection is $\{0\}$.  To do so, we show that the corresponding subsets of roots are pairwise disjoint.

First, suppose there exists $\gamma\in \Phi^+(V)\cap \Phi^-(V)$.  Then there are roots $\alpha_1,\alpha_2\in \Phi_N$ and $\beta_1,\beta_2\in \Phi^+_L$ so that
		\[
				\alpha_1+\beta_1=\gamma=\alpha_2-\beta_2.
		\]
Recall that $\Phi_N=v_1(\Delta_M)$, so we rewrite this equality as
		\[
				v_1^{-1}(\alpha_1)+v_1^{-1}(\beta_1)= v_1^{-1}(\alpha_2)-v_1^{-1}(\beta_2).
		\]
Since $v_1\in W^L$ and $\beta_1,\beta_2 \in \Phi^+_L$, we get that $v_1^{-1}(\beta_1),v_1^{-1}(\beta_2) \in \Phi^+$.  By assumption $v_1^{-1}(\alpha_1)$ and $v_1^{-1}(\alpha_2)$ are simple roots.  Therefore $v_1^{-1}(\alpha_1)+v_1^{-1}(\beta_1)\in \Phi^+$ and $v_1^{-1}(\alpha_2)-v_1^{-1}(\beta_2)\in \Phi^-$.  The two cannot be equal, giving a contradiction.

Similarly, suppose $\gamma\in \Phi^+(V)\cap \Phi_N$.  Then there exists $\alpha_1\in \Phi_N$ and $\beta_1\in \Phi_L^+$ so that $\gamma=\alpha_1+\beta_1$, implying $v_1^{-1}(\gamma)=v_{1}^{-1}(\alpha_1)+v_1^{-1}(\beta_1)$.  Since $v_1\in W^L$, $v_1^{-1}(\beta_1)\in \Phi^+$ and by assumption $v_1^{-1}(\gamma)$ and $v_1^{-1}(\alpha_1)$ are simple.  But this means that simple root $v_1^{-1}(\gamma)$ can be written as the sum of positive roots $v_1^{-1}(\alpha_1)$ and $v_1^{-1}(\beta_1)$, which is a contradiction.

Finally, $\Phi^-(V)\cap \Phi_N=\emptyset$ by a similar argument.
\end{proof}

Recall that our goal is to understand the Adjoint $U$-orbit of $N$, $U\cdot N$.  To do so, we need a few facts about unipotent groups.  Let $\tilde{U}$ be a unipotent subgroup with Lie algebra $\tilde{\fu}$.  First, since $\tilde{U}$ is unipotent the exponential map $exp: \tilde{\fu} \to \tilde{U}$ is a diffeomorphism.  Recall that for all $Y\in \tilde{\fu}$,
		\[
				exp(Y) \cdot X= X+[Y,X]+\frac{1}{2} [Y,[Y,X]] + \cdots = X+ \sum_{i=1}^{\infty} \frac{ad_Y^i(X)}{i!}.
		\]
Thus if $ad_Y: \V \to \V\subset \fg$ and $[Y,X]\in \V$ for all $Y\in \tilde{\fu}$, we get $\tilde{U}\cdot X \subset X+\V$.

Next, suppose $\tilde{U}$ acts on an irreducible affine variety $Y$.  Given $y\in Y$ the $\tilde{U}$-orbit of $y$ is closed (\cite{H}, Exercise 17.8).  Therefore if the dimension of the orbit is equal to the dimension of $Y$, $Y=\tilde{U}\cdot y$.  We can apply this to our situation as follows.

\begin{rem}\label{unipotent}  Let $\tilde{U}$ be a unipotent subgroup such that $\tilde{U}\cdot X \subset X+\V \subset \fg$ for $X\in \fg$ and $\dim \tilde{U} - \dim Z_{\tilde{U}}(X)=\dim \tilde{U}\cdot X= \dim \V$.  Then $\tilde{U}\cdot X=X+\V$.
\end{rem}

\begin{lem}  Recall that $U_L$ is the unipotent subgroup of $G$ with Lie algebra $\fu_L$.  Then $U_L \cdot N = N+V$.   
\end{lem}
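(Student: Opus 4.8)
The plan is to verify the two hypotheses of Remark \ref{unipotent} with $\tilde U = U_L$, $X = N$, and $\V = V$, namely that $U_L \cdot N \subseteq N + V$ and that $\dim U_L - \dim Z_{U_L}(N) = \dim V$; the conclusion $U_L \cdot N = N + V$ is then immediate.

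For the containment I would invoke the discussion on exponentials of unipotent groups preceding Remark \ref{unipotent}. By Remark \ref{adaction} we have both $[Y,N] \in V$ and $ad_Y(V) \subseteq V$ for every $Y \in \fu_L$. Since $\exp : \fu_L \to U_L$ is a diffeomorphism and $\exp(Y)\cdot N = N + \sum_{i \geq 1} ad_Y^{\,i}(N)/i!$, the leading term $[Y,N]$ lies in $V$ and $ad_Y$ preserves $V$, so every higher term lies in $V$ as well. Hence $U_L \cdot N \subseteq N + V$.

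The heart of the argument is the dimension count, whose key input is the surjectivity of $ad_N$ restricted to $\fu_L$. By Lemma \ref{onto} the map $ad_N : \fl \to \fg_2(\lambda)$ is onto. Using the triangular decomposition $\fl = \fu_L^- \oplus \ft \oplus \fu_L$ together with the decomposition $\fg_2(\lambda) = V^- \oplus \bigoplus_{\alpha \in \Phi_N} \fg_\alpha \oplus V$ of Lemma \ref{empty}, I would observe that $ad_N$ carries each summand of $\fl$ into a \emph{distinct} summand of $\fg_2(\lambda)$: indeed $ad_N(\fu_L^-) \subseteq V^-$ and $ad_N(\fu_L) \subseteq V$ by the computations preceding Remark \ref{adaction}, while $ad_N(\ft) \subseteq \bigoplus_{\alpha \in \Phi_N} \fg_\alpha$ since $[t,E_\alpha] = \alpha(t)E_\alpha$. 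Thus $ad_N$ is block diagonal with respect to these two direct sum decompositions, and its surjectivity forces each block to be surjective; in particular $ad_N : \fu_L \to V$ is onto.

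Finally I would convert this into the required dimension equality. Since we work over $\C$, the Lie algebra of $Z_{U_L}(N)$ is $\{Y \in \fu_L : [Y,N] = 0\} = \ker(ad_N|_{\fu_L})$, so the surjectivity just established gives $\dim Z_{U_L}(N) = \dim \fu_L - \dim V$. Orbit-stabilizer then yields $\dim U_L \cdot N = \dim U_L - \dim Z_{U_L}(N) = \dim V$, which is precisely the second hypothesis of Remark \ref{unipotent}, and the Remark delivers $U_L \cdot N = N + V$. I expect the surjectivity step to be the main obstacle: one must be sure that the three images genuinely land in distinct summands of the decomposition in Lemma \ref{empty} so that surjectivity descends block by block — this is exactly where the pairwise disjointness of the root sets proved in Lemma \ref{empty} is used.
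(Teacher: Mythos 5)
Your proposal is correct and follows essentially the same route as the paper: containment via Remark \ref{adaction} and the exponential expansion, then surjectivity of $ad_N : \fu_L \to V$ deduced from Lemma \ref{onto} together with the direct sum decomposition of Lemma \ref{empty}, and finally Remark \ref{unipotent}. Your ``block diagonal'' phrasing is just a mild abstraction of the paper's element-by-element argument (which takes $X \in V$, writes $X = [N, Y_- + S + Y_+]$, and uses the disjointness of the summands to kill $[N,Y_-]$ and $[N,S]$), so the two proofs are the same in substance.
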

\begin{proof}  First, Remark \ref{adaction} implies $U_L\cdot N\subset N+V$.  By Remark \ref{unipotent}, we have only to show that $\dim U_L -\dim Z_{U_L}(N)=\dim V$.

Since $ad_N: \fl \to \fg_2$ is surjective, for all $X\in V\subset \fg_2$ there exists $Y\in \fl$ such that $[N,Y]=X$.  Using the decomposition $\fl= \fu_L^- \oplus \ft \oplus \fu_L$, there exists $Y_-\in \fu_L^-$, $S\in \ft$ and $Y_+ \in \fu_L$ such that $Y=Y_-+S+Y_+$.  Therefore
		\[
				[N,Y_-]+[N,S]+[N,Y_+]=[N,Y]=X\in V.
		\]
But $[N,Y_-]\in V^-$ and $[N,S]\in \bigoplus_{\alpha\in \Phi_N} \fg_{\alpha}$.  So by Lemma \ref{empty}, $[N,Y_-]=[N,S]=0$.  Thus for all $X\in V$ there exists an element $Y_{+}\in \fu_L$ such that
		\[
				ad_N(Y_+)=[N,Y_+]=[N,Y]=X.
		\]
Since $ad_N: \fu_L \to V$ is surjective, $\dim V= \dim \fu_L - \dim \fu_L^{N}=\dim U_L - \dim Z_{U_L}(N)$.
\end{proof}

\begin{cor} \label{Uorb} $U\cdot N = N + \V$ where $\V= V\oplus \bigoplus_{i\geq 3} \fg_i(\lambda)$.
\end{cor}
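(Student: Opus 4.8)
The plan is to apply Remark \ref{unipotent} with $\tilde{U}=U$, $X=N$, and $\V = V\oplus \bigoplus_{i\geq 3}\fg_i(\lambda)$. Two things must be checked: first that $U\cdot N\subseteq N+\V$, and second that $\dim U - \dim Z_U(N)=\dim \V$; then Remark \ref{unipotent} gives the equality. Throughout I will use the vector space decomposition $\fu = \fu_L\oplus \fu_P$, which holds because $\lambda$ is dominant: the positive roots split into those $\gamma$ with $\langle \gamma,\lambda\rangle =0$, spanning $\fu_L\subset \fg_0(\lambda)$, and those with $\langle \gamma,\lambda\rangle >0$, spanning $\fu_P=\bigoplus_{i>0}\fg_i(\lambda)$.

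For the containment, recall from the discussion preceding Remark \ref{unipotent} that it suffices to show $[Y,N]\in \V$ and $ad_Y(\V)\subseteq \V$ for every $Y\in \fu$, and by linearity I may treat the two summands separately. For $Y\in \fu_L$ the preceding lemma gives $ad_N(\fu_L)\subseteq V$, hence $[Y,N]\in V\subseteq \V$; for $Y\in \fu_P$ we have $[Y,N]\in [\fg_i(\lambda),\fg_2(\lambda)]\subseteq \bigoplus_{i\geq 3}\fg_i(\lambda)\subseteq \V$, since $\fu_P$ lives in strictly positive weight and $N\in \fg_2(\lambda)$. That $ad_Y$ stabilizes $\V$ is checked the same way: $\fu_L\subset \fg_0(\lambda)$ preserves each $\fg_i(\lambda)$ and preserves $V$ by Remark \ref{adaction}, while $\fu_P$ raises weight and so carries both $V$ and $\bigoplus_{i\geq 3}\fg_i(\lambda)$ into $\bigoplus_{i\geq 3}\fg_i(\lambda)\subseteq \V$. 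This gives $U\cdot N\subseteq N+\V$.

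For the dimension count, note that $\dim U-\dim Z_U(N)=\dim ad_N(\fu)$ and $ad_N(\fu)=ad_N(\fu_L)+ad_N(\fu_P)$. The preceding lemma shows $ad_N:\fu_L\to V$ is onto, and Lemma \ref{onto} shows $ad_N:\fu_P\to \bigoplus_{i\geq 3}\fg_i(\lambda)$ is onto, so $ad_N(\fu)=V+\bigoplus_{i\geq 3}\fg_i(\lambda)=\V$. Since $V\subseteq \fg_2(\lambda)$ meets $\bigoplus_{i\geq 3}\fg_i(\lambda)$ only in $0$ by weight considerations, this sum is direct and $\dim ad_N(\fu)=\dim \V$. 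Applying Remark \ref{unipotent} then yields $U\cdot N=N+\V$. The only points requiring care are verifying that both conditions of the containment hold on each of $\fu_L$ and $\fu_P$, and confirming that the images of $ad_N$ on the two summands assemble to all of $\V$ with the correct dimension; once the weight grading $[\fg_i(\lambda),\fg_j(\lambda)]\subseteq \fg_{i+j}(\lambda)$ is invoked systematically, both are immediate, so I do not anticipate a serious obstacle here.
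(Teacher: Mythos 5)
Your proof is correct and follows essentially the same route as the paper: both verify the containment $U\cdot N\subseteq N+\V$ using the decomposition $\fu=\fu_L\oplus\fu_P$ and the grading $[\fg_i(\lambda),\fg_j(\lambda)]\subseteq\fg_{i+j}(\lambda)$, and both get the dimension count from surjectivity of $ad_N$ on $\fu_L$ (the preceding lemma) and on $\fu_P$ (Lemma \ref{onto}) before invoking Remark \ref{unipotent}. The only cosmetic difference is that you compute $\dim U-\dim Z_U(N)$ as $\dim ad_N(\fu)$ directly, whereas the paper decomposes the kernel as $\fu^N=\fu_L^N\oplus\fu_P^N$; these are equivalent by rank--nullity.
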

\begin{proof}  First, since $\fu=\fu_L\oplus \fu_P$, $ad_N: \fu_L \to V$, and $ad_N: \fu_P \to \bigoplus_{i\geq 3} \fg_i(\lambda)$ we conclude that $[Y, N]\in \V$ for all $Y\in \fu$.  Additionally, $ad_Y: \V \to \V$ for all $Y\in \fu$, therefore $U \cdot N \subset N+ \V$.  Note that $\fu^N=\fu_L^N\oplus \fu_P^N$ (since $V\cap \bigoplus_{i\geq 3} \fg_i(\lambda)=\{ 0 \}$) so
		\begin{eqnarray*}
				\dim U - \dim Z_U(N) &=& \dim \fu - \dim \fu^N\\
								&=& \dim \fu_L - \dim \fu_L^{N} + \dim \fu_P - \dim \fu_P^{N} \\
								&=& \dim V + \dim\; \bigoplus_{i\geq 3} \fg_i(\lambda)\\
								&=& \dim \V
		\end{eqnarray*}
where the third equality follows by Lemma \ref{onto}.  Hence $U\cdot N = N + \V$ by Remark \ref{unipotent}.
\end{proof}

\begin{rem}  \label{Nreg2}  If $N\in \fg$ is regular then $N=\sum_{\alpha\in \Delta} E_{\alpha}$ with respect to the choice of Borel subalgebra above.  Then $\tilde{\lambda}$ is dominant and regular, i.e. $\tilde{\lambda}=\lambda$.  In particular, $\fl=\fg_0(\lambda)=\ft$ so $V=\{0\}=V^-$ and $\V = \bigoplus_{i\geq 3} \fg_i(\lambda) = \bigoplus_{\gamma\in \Phi^+-\Delta} \fg_{\gamma}$.
\end{rem}

For future use, we restate Corollary \ref{Uorb} as follows.

\begin{cor} \label{Uorb3}  Suppose $N\in \fg$ is regular in some Levi subalgebra of $\fg$.  Then there exists a Borel subalgebra $\fb$ of $\fg$ so that $N\in \fu$ and $U\cdot N=N+\V$ where $\V\subset \fu$ is a direct sum of root spaces such that $N\notin \V$.
\end{cor}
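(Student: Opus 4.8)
The plan is to observe that this corollary is essentially a repackaging of Corollary \ref{Uorb}, so almost all of the work is already in hand; what remains is to check the three properties asserted in the statement against the Borel subalgebra constructed earlier in the section. Accordingly I would take $\fb$ to be exactly the Borel subalgebra fixed after replacing $N$ by its $v_1$-conjugate, so that $N=\sum_{\alpha\in\Phi_N}E_{\alpha}$, the one-parameter subgroup $\lambda=\dot{v}_1\cdot\tilde{\lambda}$ is dominant with respect to $\Phi^+$ (Lemma \ref{dominant}), and $P$ is the standard parabolic associated to $N$.

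The one genuinely new ingredient is an elementary dominance argument placing all positive weight spaces inside $\fu$. If $\gamma\in\Phi^-$ then $-\gamma\in\Phi^+$, so dominance gives $\langle\gamma,\lambda\rangle=-\langle-\gamma,\lambda\rangle\leq 0$; hence any root $\gamma$ with $\langle\gamma,\lambda\rangle=i>0$ must be positive. Consequently $\fu_P=\bigoplus_{i>0}\fg_i(\lambda)\subseteq\fu$, and in particular $N\in\fg_2(\lambda)\subseteq\fu_P\subseteq\fu$, which gives the first assertion.

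Next I would invoke Corollary \ref{Uorb} directly to obtain $U\cdot N=N+\V$ with $\V=V\oplus\bigoplus_{i\geq 3}\fg_i(\lambda)$. To see that $\V\subset\fu$ is a direct sum of root spaces: by definition $V=\bigoplus_{\gamma\in\Phi^+(V)}\fg_{\gamma}$ with $\Phi^+(V)\subseteq\Phi^+$, so $V\subseteq\fu$, while $\bigoplus_{i\geq 3}\fg_i(\lambda)\subseteq\fu_P\subseteq\fu$ by the observation above. Each of these two pieces is a sum of root spaces, and they meet only in $\{0\}$ since $V\subseteq\fg_2(\lambda)$, so $\V$ is itself a direct sum of root spaces contained in $\fu$.

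Finally, for $N\notin\V$ I would read off from Lemma \ref{empty} that the root supports $\Phi_N$, $\Phi^+(V)$, and $\{\gamma:\langle\gamma,\lambda\rangle\geq 3\}$ are pairwise disjoint; since $N$ is supported on $\Phi_N$ while $\V$ is supported on $\Phi^+(V)\sqcup\{\gamma:\langle\gamma,\lambda\rangle\geq 3\}$, these supports are disjoint and hence $N\notin\V$. I do not anticipate a real obstacle: the substance of the orbit computation is Corollary \ref{Uorb}, and the remaining points are bookkeeping about root supports, with the only mildly nontrivial step being the dominance argument that forces the positive weight spaces into $\fu$.
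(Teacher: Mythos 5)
Your proof is correct and follows essentially the same route as the paper: choose the Borel subalgebra produced by the section's construction (so that $P$ is standard and $N=\sum_{\alpha\in\Phi_N}E_{\alpha}$ with $v_1^{-1}(\Phi_N)\subset\Delta$) and then cite Corollary \ref{Uorb}. The paper's proof is just these two lines with the verifications left implicit; your dominance argument showing $\fu_P\subseteq\fu$ (hence $N\in\fu$ and $\V\subset\fu$) and the support-disjointness argument via Lemma \ref{empty} (hence $N\notin\V$) are precisely the suppressed details, and they are right.
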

\begin{proof}  Pick a Borel subalgebra $\fb\subset \fg$ so that the parabolic subgroup $P=LU_P$ associated to $N$ is standard and $N=\sum_{\alpha\in \Phi_N} E_{\alpha}$ where $\Phi_N\subset \Phi^+$ has the property that there exists $v_1\in W^L$ such that $v_1^{-1}(\Phi_N)\subset \Delta$.  Such a Borel subalgebra can be found by following the process given above.  The statement now follows from Corollary \ref{Uorb}.
\end{proof}

\begin{thm} \label{paving}  Suppose $N\in \fg$ is regular in some Levi subalgebra $\fm$ of $\fg$.  Then $\B(N,H)$ is paved by affines.\end{thm}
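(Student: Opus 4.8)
The plan is to verify the hypothesis of Remark~\ref{paveit}: it is enough to prove that $X_w\cap\B(N,H)$ is isomorphic to an affine space for every $w\in W$. Throughout I would work with the Borel subalgebra fixed at the start of Section~\ref{Nilpotent}, so that $\fm$ is standard and $N=\sum_{\alpha\in\Delta_M}E_\alpha$ is a sum of \emph{simple} root vectors. The first input is smoothness. Corollary~\ref{Uorb3} writes the adjoint orbit as $U\cdot N=N+\V$ with $\V\subset\fu$ a direct sum of root spaces and $N\notin\V$; taking $X=N$ (so that the semisimple part is $S=0$ and $N$ is automatically in standard position) this is exactly the hypothesis of Proposition~\ref{smooth}, which then guarantees that each nonempty $X_w\cap\B(N,H)$ is smooth. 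What remains is to upgrade ``smooth'' to ``affine space,'' and this is precisely what Lemma~\ref{vb1} delivers once a $\C^*$-action with the right properties is produced.

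The key observation is that, even though no positive-dimensional torus fixes a nilpotent $N$, the Hessenberg variety is invariant under \emph{rescaling} of $N$: because $H$ is a linear subspace, $g^{-1}\cdot(cN)\in H$ if and only if $g^{-1}\cdot N\in H$, so $\B(cN,H)=\B(N,H)$ for every $c\in\C^*$. Thus any cocharacter $\mu:\C^*\to T$ with $\mu(z)\cdot N=z^{k}N$ acts on $\B(N,H)$ through the translations $l_{\mu(z)}$, and since $\mu(z)\in T\subset B$ it also preserves each Schubert cell $X_w$. I would take $\mu$ to be a dominant \emph{regular} cocharacter whose pairing $\langle\alpha,\mu\rangle$ is constant as $\alpha$ ranges over $\Delta_M$; because $N$ is a sum of simple root vectors indexed by $\Delta_M$, a suitable multiple of the half-sum of coroots $\rho^\vee$ (for which $\langle\alpha,\rho^\vee\rangle$ is the same for all simple $\alpha$) already does this and scales $N$ uniformly. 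Placing this in the framework of Section~\ref{fixedpts} with $Q=B$ and $Z=T$, Remark~\ref{cases} shows that the fixed locus is the single point $X_w^{\mu}=\{\dot w\cdot\fb\}$ and that the bundle $(\ref{vb})$ is the trivial bundle $X_w\cong\fu^w\to\{\dot w\cdot\fb\}$, on whose fibre $\fu^w$ the induced $\C^*$-action has strictly positive weights.

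The argument then closes in one step. With $E=X_w\cong\fu^w$, base $\{\dot w\cdot\fb\}$, and $E_0=X_w\cap\B(N,H)$, the subvariety $E_0$ is closed in $X_w$, is stable under the $\C^*$-action (it preserves both $\B(N,H)$ and $X_w$), and is smooth by the first paragraph. Lemma~\ref{vb1} then exhibits $E_0\to\{\dot w\cdot\fb\}$ as a vector sub-bundle over a point, i.e.\ $E_0$ is a linear subspace of the fibre $\fu^w\cong\C^{|\Phi_w|}$. Hence $X_w\cap\B(N,H)\cong\C^{d}$ with $d$ the dimension computed in Proposition~\ref{smooth}, and Remark~\ref{paveit} yields the affine paving. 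Note that this runs uniformly: the regular nilpotent case (where $\fm=\fg$ and $\mu$ may be taken to be the Jacobson--Morozov cocharacter of Remark~\ref{Nreg2}) is merely the instance $\Delta_M=\Delta$, so no separate base case or induction is needed.

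The main obstacle is not the bundle computation, which is short, but arranging the two inputs \emph{in the same Borel}. The cocharacter $\mu$ rescales $N$ uniformly only because $N$ is a sum of simple root vectors, forcing me into the frame where $\fm$ is standard; yet Corollary~\ref{Uorb3} produces the orbit decomposition $U\cdot N=N+\V$ in the conjugated frame where the Jacobson--Morozov parabolic $P$ is standard. The crux is therefore to check that the orbit analysis of Lemmas~\ref{onto} and \ref{empty} and Corollary~\ref{Uorb} is valid directly in the $\fm$-standard frame, using the (possibly non-dominant) grading cocharacter $\tilde\lambda$ in place of its dominant conjugate. This should go through because those computations only use that the summands of $N$ are simple and that $ad_N$ is surjective on the relevant weight spaces, both of which hold before the conjugation by $v_1$; once this compatibility is in hand, the single regular scaling torus does all the work.
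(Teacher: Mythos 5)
Your torus-action mechanics are fine: the scaling invariance $\B(cN,H)=\B(N,H)$ is correct since $H$ is linear, and over the single fixed point $\dot w\cdot\fb$ Lemma~\ref{vb1} would indeed exhibit $X_w\cap\B(N,H)$ as a linear subspace of $\fu^w$. The genuine gap is exactly at the point you flagged and waved through: the orbit analysis does \emph{not} survive transport to the $\fm$-standard frame. Your argument needs $U\cdot N=N+\V$ with $\V$ a direct sum of root spaces in the frame where $N=\sum_{\alpha\in\Delta_M}E_\alpha$, but Corollary~\ref{Uorb3} only produces this in the conjugated frame where the Jacobson--Morozov cocharacter is dominant, and the claim is false in general in your frame. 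Concretely, take $\fg=\mathfrak{sl}_4(\C)$, $\Delta_M=\{\alpha_1,\alpha_3\}$, $N=E_{\alpha_1}+E_{\alpha_3}=E_{12}+E_{34}$. Here $\tilde\lambda$ is not dominant ($\langle\alpha_2,\tilde\lambda\rangle=-2$), so $\fu$ contains a positive root space of negative $\tilde\lambda$-weight, and one computes $[E_{23},N]=E_{24}-E_{13}$ with all higher brackets of $E_{23}$ against this vanishing, giving $U\cdot N=N+\operatorname{span}(E_{24}-E_{13},\,E_{14})$. This is a translate of a $2$-dimensional linear subspace, but it is \emph{not} of the form $N+\V$ with $\V$ a sum of root spaces: the orbit moves along the line $\C(E_{24}-E_{13})$ inside $\fg_{\alpha_1+\alpha_2}\oplus\fg_{\alpha_2+\alpha_3}$ without containing either root space. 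Consequently Proposition~\ref{smooth} cannot be invoked in your frame, and the smoothness, the nonemptiness criterion $N\in\dot w\cdot H$, and the dimension formula you quote are all unavailable there. The failure is caused precisely by the positive roots of negative $\tilde\lambda$-weight, which is what the conjugation by $v_1$ (Lemma~\ref{dominant} and the surrounding setup) is designed to eliminate; your heuristic that Lemmas~\ref{onto} and \ref{empty} ``only use that the summands of $N$ are simple'' misses that Corollary~\ref{Uorb} also needs $\fu\subset\bigoplus_{i\geq 0}\fg_i(\lambda)$, i.e.\ dominance.

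For comparison, the paper accepts the conjugated frame, where Corollary~\ref{Uorb} is true but $\lambda$ is no longer regular: $\fl=\fg_0(\lambda)$ can exceed $\ft$ (in the example above $\Delta_L=\{\alpha_1,\alpha_3\}$), so $X_w^{\lambda}$ is a Schubert cell $X_y$ in $\B(L)$ rather than a point. It then collapses this positive-dimensional base with a second cocharacter $\mu$ coming from $Z_G(v_1Mv_1^{-1})^0$, which fixes $N$ and is regular on $\Phi_L$ (Remark~\ref{regpsg}), yielding the two-step tower of bundles. Your one-cocharacter idea is salvageable, but in the conjugated frame rather than the $\fm$-standard one: since every component of $N$ lies in $\fg_2(\lambda)$ and $\langle\alpha,\mu\rangle=0$ for all $\alpha\in\Phi_N$, the cocharacter $\nu=\lambda+\mu$ is dominant, regular with respect to $\Phi$, and satisfies $\nu(z)\cdot N=z^{2}N$; your scaling argument then runs verbatim with $Q=B$ and a single application of Lemma~\ref{vb1}, with smoothness supplied by Corollary~\ref{Uorb} and Proposition~\ref{smooth} in that frame. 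As written, however, the proposal's reduction to the $\fm$-standard frame is a genuine gap, not a routine verification.
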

\begin{proof}  Let $H$ be a Hessenberg space with respect to $\fb$.  By Corollary \ref{Uorb}, the $U$-orbit of $N$ is $N+\V$ where $\V$ is a direct sum of root spaces such that $N\notin \V$.  Therefore by Proposition \ref{smooth}, $X_w\cap \B(N,H)\neq \emptyset$ if and only if $N\in \dot{w}\cdot H$ and when $X_w\cap \B(N,H)\neq \emptyset$, the intersection is smooth.  Recall that equation (\ref{vb}) exhibits a vector bundle $\pi_{\lambda}: X_w \to X_w^{\lambda}$ with a fiber preserving the strictly positive $\C^*$-action induced by $\lambda$.  In addition, $X_w\cap \B(N,H)$ is stable under this action.  Indeed, if $u\dot{w}\cdot \fb \in X_w\cap \B(N,H)$ then for all $z\in \C^*$
		\[
				(\lambda(z)u)^{-1}\cdot N= u^{-1}\cdot (\lambda(z^{-1})\cdot N)= z^{-2} u^{-1}\cdot N\in \dot{w}\cdot H.
		\]  
Applying Lemma \ref{vb1}, we get a vector sub-bundle $ \pi_{\lambda}: X_w\cap \B(N,H) \to X_w^{\lambda} \cap \B(N,H)$.  Write $w=yv$ where $y\in W_L$ and $v\in W^L$.  Then $X_w^{\lambda}\cong X_y$ where $X_y$ is the Schubert cell in $\B(L)$ corresponding to $y\in W_L$ (see Remark \ref{isom}).  

Consider the torus $Z=Z_G(v_1Mv_1^{-1})^0$.  Let $\mu: \C^* \to T$ be a dominant 1-parameter subgroup such that $\B^Z=\B^{\mu}$.  Then $\mu$ is regular with respect to $\Phi_L^+$ by Remark \ref{regpsg} and $\mu(z)\cdot N=N$ for all $z\in \C^*$ since $N\in \dot{v}_1\cdot \fm$.  Apply Remark \ref{cases} to get a vector bundle $\pi_{\mu}: X_y\to X_y^{\mu}=\{ \dot{y}\cdot \fb_L \}$ with fiber preserving a strictly positive $\C^*$-action induced by $\mu$.  For all $u\dot{y}\cdot \fb_L\in X_y\cap \B(N,H)$,
		\[
				(\mu(z)u)^{-1}\cdot N = u^{-1}\cdot( \mu(z^{-1})\cdot N)=u^{-1}\cdot N\in \dot{w}\cdot H
		\]  
so $X_y \cap \B(N,H)$ is stable under this $\C^*$-action.  Now $X_y\cap \B(N,H)$ is smooth since $X_y\cap \B(N,H)\cong X_w^{\lambda}\cap \B(N,H)$, the $\C^*$-fixed points of smooth variety $X_w\cap \B(N,H)$.  Thus we can apply Lemma \ref{vb1} to $\pi_{\mu}$ to get a trivial vector sub-bundle $\pi_{\mu}: X_y \cap \B(N,H) \to \{\dot{y}\cdot \fb_L\}$.  Using the identification $X_y\cong X_w^{\lambda}$ there is a tower of vector bundles
		\[
				\xymatrix{ X_w\cap \B(N,H) \ar[d]^{\pi_{\lambda}} \\ 																			X_w^{\lambda} \cap \B(N,H) \ar[d]^{\pi_{\mu}}\\ \{\dot{w}\cdot \fb\} }
		\]
over the fixed point $\dot{w}\cdot \fb$.  The composition must be trivial, so $X_w\cap \B(N,H) \cong \C^{d}$ for some $d\in \Z$.  The result now follows from Remark \ref{paveit}.
\end{proof}

\begin{rem}  If $G=SL_n(\C)$ then Theorem \ref{paving} proves that $\B(N,H)$ is paved by affines for each Hessenberg space $H$ and nilpotent element $N\in \mathfrak{sl}_n(\C)$.  Indeed, given $N$ let $d_1\geq \cdots \geq d_k$ be the size of its Jordan blocks.  Then when $N$ is in Jordan form, it is regular in the standard Levi subalgebra $\mathfrak{sl}_{d_1}(\C) \times \cdots \times \mathfrak{sl}_{d_k}(\C)$ of $\fg$.  
\end{rem}

\begin{rem}  There are nilpotent elements in simple Lie algebras $\fg$, not of type $A$, which are not regular in a Levi subalgebra, such as any {\it distinguished} element of $\fg$ which is not regular.  When $\fg$ is the complex symplectic algebra of dimension $2n$ then a nilpotent element is distinguished if the sizes of its Jordan blocks consist of distinct even parts and regular if its Jordan form consists of a single block of dimension $2n$.  In general, if a nilpotent element is distinguished but not regular in $\fg$, or in a Levi subalgebra of $\fg$, it will not satisfy the assumptions of Theorem \ref{paving}.  Therefore while this theorem generalizes the results of Tymoczko in \cite{T2} to a larger collection of nilpotent elements, there are still interesting cases to be considered.  
\end{rem}

To illustrate our method, we compute the dimension of the affine cells paving the Hessenberg variety associated to a regular nilpotent element.

\begin{cor} \label{Nreg}  Let $N$ be a regular nilpotent element of $\fg$.  Fix a Hessenberg space $H$ with respect to $\fb$.  Then for all $w\in W$, $X_w\cap \B(N,H)$ is nonempty if and only if $\Delta\subset w(\Phi_H)$.  When $X_w\cap \B(N,H)$ is nonempty,
		\[
				\dim \left( X_w\cap \B(N,H) \right)= |\Phi_w\cap w(\Phi_H^-)|,
		\]
where $\Phi_H^-=\Phi^-\cap \Phi_H$.
\end{cor}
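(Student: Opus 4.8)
The plan is to derive both assertions from Proposition \ref{smooth}, specialized to a regular nilpotent via the explicit orbit description in Remark \ref{Nreg2}. Since $N$ is regular nilpotent, by Remark \ref{Nreg2} I may take $N=\sum_{\alpha\in\Delta}E_{\alpha}$ in standard position (so $S=0$ and $X=N$), and the $U$-orbit is $U\cdot N=N+\V$ with $\V=\bigoplus_{\gamma\in\Phi^+-\Delta}\fg_{\gamma}$; in particular $\Phi(\V)=\Phi^+-\Delta$ and $N\notin\V$. Thus the hypotheses of Proposition \ref{smooth} are satisfied, and everything reduces to translating that proposition into root-combinatorial statements.

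For nonemptiness I would first record that $\dot{w}\cdot H=\ft\oplus\bigoplus_{\delta\in w(\Phi_H)}\fg_{\delta}$, using $\dot{w}\cdot\ft=\ft$ and $\dot{w}\cdot\fg_{\gamma}=\fg_{w(\gamma)}$. Proposition \ref{smooth} says $X_w\cap\B(N,H)\neq\emptyset$ iff $N\in\dot{w}\cdot H$, and since $N$ is the sum of the simple root vectors, this holds exactly when $\alpha\in w(\Phi_H)$ for every $\alpha\in\Delta$, i.e. when $\Delta\subseteq w(\Phi_H)$. This is the stated criterion.

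For the dimension, Proposition \ref{smooth} gives $\dim(X_w\cap\B(N,H))=|\Phi_w|-\dim\V/(\V\cap\dot{w}\cdot H)$, where $\dim\V/(\V\cap\dot{w}\cdot H)=|\Phi(\V)|-|\Phi(\V)\cap w(\Phi_H)|=|\Phi(\V)\setminus w(\Phi_H)|$. The main combinatorial step is to rewrite $\Phi^+\setminus w(\Phi_H)$ using $\Phi_w$ and $w(\Phi_H^-)$. Recall from Lemma \ref{S cells} that $\Phi_w=\{\gamma\in\Phi^+:w^{-1}(\gamma)\in\Phi^-\}=\Phi^+\setminus w(\Phi^+)$, and that $\Phi_H=\Phi^+\sqcup\Phi_H^-$ since $\Phi^+\subseteq\Phi_H$, hence $w(\Phi_H)=w(\Phi^+)\sqcup w(\Phi_H^-)$. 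I would argue that for $\gamma\in\Phi^+$ one has $\gamma\notin w(\Phi_H)$ precisely when $\gamma\in\Phi_w$ and $\gamma\notin w(\Phi_H^-)$: membership $\gamma\in w(\Phi^+)$ already forces $\gamma\in w(\Phi_H)$, so failure requires $\gamma\in\Phi^+\setminus w(\Phi^+)=\Phi_w$, and then $\gamma\in w(\Phi^-)$, so membership in $w(\Phi_H)$ reduces to membership in $w(\Phi_H^-)$. Therefore $\Phi^+\setminus w(\Phi_H)=\Phi_w\setminus w(\Phi_H^-)$.

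It remains to eliminate the simple roots in $\Phi(\V)=\Phi^+-\Delta$, and this is exactly where the nonemptiness hypothesis does the work. Because the intersection is nonempty we have $\Delta\subseteq w(\Phi_H)$, so no simple root lies in $\Phi^+\setminus w(\Phi_H)$; consequently $\Phi(\V)\setminus w(\Phi_H)=(\Phi^+\setminus w(\Phi_H))\setminus\Delta=\Phi^+\setminus w(\Phi_H)=\Phi_w\setminus w(\Phi_H^-)$. Using $\Phi_w\subseteq w(\Phi^-)$ once more to identify $\Phi_w\cap w(\Phi_H)=\Phi_w\cap w(\Phi_H^-)$, I conclude $\dim(X_w\cap\B(N,H))=|\Phi_w|-|\Phi_w\setminus w(\Phi_H^-)|=|\Phi_w\cap w(\Phi_H^-)|$, as claimed. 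The only delicate point—and the step I expect to require the most care in the write-up—is precisely this cancellation of the simple-root contribution, which causes the dimension formula from Proposition \ref{smooth} to collapse to the clean expression and which relies directly on the nonemptiness criterion established at the outset.
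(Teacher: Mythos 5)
Your proof is correct and follows essentially the same route as the paper's: both reduce nonemptiness to $N\in\dot{w}\cdot H$ via Proposition \ref{smooth} and then compute the dimension from $\V=\bigoplus_{\gamma\in\Phi^+-\Delta}\fg_{\gamma}$ (Remark \ref{Nreg2}) by the same root-counting argument. The simple-root cancellation you single out as delicate is exactly the step the paper performs implicitly (passing from $\{\gamma\in\Phi^+-\Delta : w^{-1}(\gamma)\notin\Phi_H\}$ to $\{\gamma\in\Phi_w : w^{-1}(\gamma)\notin\Phi_H^-\}$ under the nonemptiness hypothesis), so your write-up merely makes that dependence explicit.
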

\begin{proof}  First $N \in \dot{w}\cdot H$ if and only if $X_w\cap \B(N,H)$ is nonempty.  But $N$ is the sum of all simple root vectors with respect to the fixed Borel subalgebra $\fb$, so $N\in \dot{w}\cdot H$ if and only if $\Delta \subset w(\Phi_H)$.  To calculate the dimension of the nonempty set $X_w\cap \B(N,H)$, recall that in this case $\V=\bigoplus_{\gamma\in \Phi^+ - \Delta} \fg_{\gamma}$ by Remark \ref{Nreg2}.  Thus by Proposition \ref{smooth},
		\begin{eqnarray*}
				\dim X_w\cap \B(N,H) &=& |\Phi_w| - \dim \V/(\V \cap \dot{w}\cdot H)\\
								&=& |\Phi_w| - |\{ \gamma\in \Phi^+ -\Delta : w^{-1}(\gamma)\notin \Phi_H \}|\\
								&=&|\Phi_w| - |\{ \gamma\in \Phi_w : w^{-1}(\gamma)\notin \Phi_H^- \}|\\
								&=& |\{ \gamma\in \Phi_w : w^{-1}(\gamma)\in \Phi_H^- \}|\\
								&=& |\Phi_w \cap w(\Phi_H^-)|
		\end{eqnarray*}
as required.
\end{proof}


\section{The arbitrary case, $X\in \fg$} \label{arbitrary}


We extend the affine paving result of the previous section to many Hessenberg varieties $\B(X,H)$ where $X\in \fg$ is not necessarily nilpotent.  Let $X=S+N$ be the Jordan decomposition of $X$ and $M=Z_G(S)$.  Then $M$ is a Levi subgroup of $G$ whose Lie algebra $\fm$ contains $X$.  

Suppose $N$ is regular in some Levi subalgebra of $\fm$.  By Corollary \ref{Uorb3} there exists a standard Borel subalgebra $\fb_M$ of $\fm$ so that
		\[
				U_M \cdot N=N+\V_N
		\]
where $N\in \fu_M$ and $\V_N \subset \fu_M$ is a direct sum of root spaces so that $N \notin \V_N$.  Since $S$ is in the center of $\fm$, $S\in \ft$ where $\ft$ is a fixed standard Cartan subalgebra of $\fm$.

Fix a Borel subalgebra $\fb$ of $\fg$ so that $\fm$ is standard and $\fb\cap \fm=\fb_M$ is the standard Borel subalgebra of $\fm$ above.  Then $X=S+N$ is in standard position with respect to $(\fb,\ft)$.  Let $Q=MU_Q$ denote the standard parabolic associated to $M$ in this basis.
 
\begin{lem} \label{Uorb2}  $U\cdot X=X+\V$ where $\V= \V_N \oplus \fu_Q$. 
\end{lem}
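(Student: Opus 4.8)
The plan is to apply Remark \ref{unipotent} to the maximal unipotent group $U$ acting on the affine space $X+\V$. Concretely, I must verify two things: that $U\cdot X\subseteq X+\V$, and that $\dim U-\dim Z_U(X)=\dim\V$. Granting both, Remark \ref{unipotent} forces the orbit $U\cdot X$ to fill out all of $X+\V$.

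For the containment I would work throughout with $\fu=\fu_M\oplus\fu_Q$ and write $Y=Y_M+Y_Q$ accordingly. By the criterion recalled just before Remark \ref{unipotent}, it suffices to check $[Y,X]\in\V$ and $ad_Y(\V)\subseteq\V$ for every $Y\in\fu$. Both reduce to two ingredients: that $\fu_Q$ is an ideal of $\fq$ (so that bracketing anything in $\fq\supseteq\fm$ against $\fu_Q$ lands back in $\fu_Q$), and the nilpotent case from Corollary \ref{Uorb3}, namely $U_M\cdot N=N+\V_N$ with $\V_N\subseteq\fu_M$ both $ad_{\fu_M}$-stable and containing $ad_{\fu_M}(N)$. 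Since $S$ is central in $\fm$ we have $[Y_M,S]=0$, while $[Y_M,N]\in\V_N$, $[Y_Q,S]\in\fu_Q$, and $[Y_Q,N]\in\fu_Q$ (as $N\in\fu_M\subseteq\fq$); summing gives $[Y,X]\in\V_N\oplus\fu_Q=\V$. The stability $ad_Y(\V)\subseteq\V$ follows from the same two facts applied to each of $ad_{Y_M}(\V_N)$, $ad_{Y_M}(\fu_Q)$, $ad_{Y_Q}(\V_N)$, $ad_{Y_Q}(\fu_Q)$.

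The substantive step is the dimension count, and its crux is to identify the centralizer $\fu^X:=\{Y\in\fu:[Y,X]=0\}$. Here I would invoke the Jordan decomposition at the level of $ad$: since $ad_X=ad_S+ad_N$ is the Jordan decomposition of $ad_X$ with $ad_S,ad_N$ expressible as polynomials in $ad_X$ having no constant term, one gets $\fg^X=\fg^S\cap\fg^N$. Because $M=Z_G(S)$ we have $\fg^S=\fm$, so $\fg^X=\fm\cap\fg^N$ and hence $\fu^X=\fu\cap\fg^X=(\fu\cap\fm)\cap\fg^N=\fu_M\cap\fg^N=\fu_M^N$, using $\fu\cap\fm=\fu_M$. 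The key point is that enlarging $N$ to $X=S+N$ annihilates precisely the $\fu_Q$-part of the centralizer while leaving the $\fu_M$-part untouched; this is where the hypothesis $M=Z_G(S)$ does the real work, and I expect verifying this centralizer identity to be the main subtlety. (Alternatively one can split $\fu^X=\fu_M^N\oplus\fu_Q^X$ directly from the bracket computation above and then argue $\fu_Q^X=0$ from injectivity of $ad_S$ on $\fu_Q$, but the $ad$-Jordan identity is cleaner.)

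Finally I would assemble the count: using $\fu=\fu_M\oplus\fu_Q$, the identification $\fu^X=\fu_M^N$, and the nilpotent-case equality $\dim\fu_M-\dim\fu_M^N=\dim\V_N$ coming from $U_M\cdot N=N+\V_N$, I obtain
\[
\dim U-\dim Z_U(X)=\dim\fu-\dim\fu^X=(\dim\fu_M-\dim\fu_M^N)+\dim\fu_Q=\dim\V_N+\dim\fu_Q=\dim\V,
\]
whereupon Remark \ref{unipotent} yields $U\cdot X=X+\V$.
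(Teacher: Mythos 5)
Your proof is correct and takes essentially the same route as the paper: both check the containment $U\cdot X\subseteq X+\V$, identify the centralizer $\fu^X=\fu_M^N$ via the Jordan-decomposition identity $\fg^X=\fg^S\cap\fg^N$ together with $\fg^S=\fm$, and then conclude with the dimension count feeding into Remark \ref{unipotent}. The only cosmetic differences are that the paper computes the centralizer by splitting $\fu^X=\fu_M^X\oplus\fu_Q^X$ and killing $\fu_Q^X$ via $\fu_Q^S=\{0\}$ — exactly the alternative you mention in passing — and that it first records the auxiliary fact $U\cdot S=S+\fu_Q$, which your argument does not need.
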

\begin{proof}  The $U$-orbit of $S$, the semisimple part of $X$, is $\fu_Q$.  Indeed, $\fm=\fg^S$ and $\fu_Q=\bigoplus_{\gamma\in \Phi^+, \, \gamma(S)\neq 0} \fg_{\gamma}$.  For all $Y\in \fu$, $ad_Y: \fu_Q \to \fu_Q$ and $[Y,S]\in \fu_Q$, so $U\cdot S \subset S+\fu_Q$.  In addition,
		\begin{eqnarray*}
				\dim U- \dim Z_U(S) &=& \dim \fu -\dim \fu^S \\
							         &=& \dim \fu- \dim (\fm\cap \fu)\\
							         &=& \dim \fu - \dim \fu_M\\
							         &=& \dim \fu_Q
		\end{eqnarray*}
implying $U\cdot S=S+\fu_Q$ by Remark \ref{unipotent}.

Certainly $U\cdot X\subset X+\V$.  Consider $\fu = \fu_M \oplus \fu_Q$.  Since $X\in \fm$, $\fu^{X}=\fu_M^{X} \oplus \fu_Q^{X}$.  By properties of the Jordan form, $\fg^X=\fg^S\cap \fg^N$.  Thus $\fu_Q^{X}=\fu_Q^{S}\cap \fu_Q^{N}=\{0\}$ since $\fu_Q^{S}=\{0\}$.  Similarly, $\fu_M^{X}=\fu_M^{S}\cap \fu_M^{N}=\fu_M \cap\fu_M^{N}= \fu_M^{N}$.  Now,
		\begin{eqnarray*}
				\dim U - \dim Z_{U}(X) &=& \dim \fu - \dim \fu^{X} \\
								&=& \dim \fu_M - \dim \fu_M^{X} +\dim \fu_Q - \dim \fu_Q^{X}\\
								&=& \dim \fu_M - \dim \fu_M^{N} + \dim \fu_Q\\
								&=& \dim \V_N + \dim \fu_Q
		\end{eqnarray*}
so $U\cdot X=X+\V$.
\end{proof}

\begin{prop}  Let $H$ be a fixed Hessenberg space in $\fg$ with respect to $\fb$.  Then for each $v\in W^M$, $H_v:=\dot{v}\cdot H \cap \fm$ is a Hessenberg space in $\fm$ with respect to $\fb_M$.
\end{prop}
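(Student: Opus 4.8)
The plan is to translate everything into root-theoretic data and then verify the two defining conditions of a Hessenberg space directly. First I would write $H = \ft \oplus \bigoplus_{\gamma \in \Phi_H} \fg_{\gamma}$, recalling from the discussion following the definition of a Hessenberg space that $\Phi^+ \subseteq \Phi_H$ and that $\Phi_H$ is closed under addition with roots from $\Phi^+$. Since $\dot{v} \in N_G(T)$ fixes $\ft$ and sends $\fg_{\gamma}$ to $\fg_{v(\gamma)}$, one gets $\dot{v} \cdot H = \ft \oplus \bigoplus_{\gamma \in \Phi_H} \fg_{v(\gamma)}$. Intersecting with $\fm = \ft \oplus \bigoplus_{\gamma \in \Phi_M} \fg_{\gamma}$, and noting both subspaces are $\ft$-stable sums of root spaces, yields $H_v = \ft \oplus \bigoplus_{\delta \in \Phi_{H_v}} \fg_{\delta}$ where $\Phi_{H_v} = v(\Phi_H) \cap \Phi_M$. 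By the root-space characterization applied to $\fm$ with positive roots $\Phi_M^+$, it then suffices to show $\Phi_M^+ \subseteq \Phi_{H_v}$ and that $\Phi_{H_v}$ is closed under addition with roots from $\Phi_M^+$.

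The main step — and the only one I expect to require any thought — is to establish that $v^{-1}(\beta) \in \Phi^+$ for every $\beta \in \Phi_M^+$. I would deduce this from the defining property of $W^M$: the condition $v \in W^M$ means $\Phi_v \subseteq \Phi(\fu_Q)$, and since $\Phi_M$ and $\Phi(\fu_Q)$ are disjoint (as $\fm$ and $\fu_Q$ share no root spaces), any $\beta \in \Phi_M^+ \subseteq \Phi^+$ satisfies $\beta \notin \Phi_v$. By the definition $\Phi_v = \{ \gamma \in \Phi^+ : v^{-1}(\gamma) \in \Phi^- \}$ this forces $v^{-1}(\beta) \in \Phi^+$, which is exactly the crux.

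Granting this, the first condition is immediate: for $\beta \in \Phi_M^+$ we have $v^{-1}(\beta) \in \Phi^+ \subseteq \Phi_H$, so $\beta \in v(\Phi_H)$, and since $\beta \in \Phi_M$ as well we get $\beta \in \Phi_{H_v}$; together with $\ft \subseteq H_v$ this gives $\fb_M \subseteq H_v$. For the second condition I would take $\delta \in \Phi_{H_v}$ and $\alpha \in \Phi_M^+$ with $\delta + \alpha \in \Phi$. Since $\fm$ is a Lie subalgebra and $\delta, \alpha \in \Phi_M$, the root $\delta + \alpha$ again lies in $\Phi_M$. Moreover $v^{-1}(\delta) \in \Phi_H$ and $v^{-1}(\alpha) \in \Phi^+$ by the crux, so $v^{-1}(\delta + \alpha) = v^{-1}(\delta) + v^{-1}(\alpha)$ lies in $\Phi_H$ by closure of $\Phi_H$ under addition with positive roots, whence $\delta + \alpha \in v(\Phi_H) \cap \Phi_M = \Phi_{H_v}$. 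This verifies both conditions and shows $H_v$ is a Hessenberg space in $\fm$ with respect to $\fb_M$. The only delicate ingredient is the minimal-representative property of $W^M$ above; the rest is routine bookkeeping with the root data.
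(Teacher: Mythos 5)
Your proof is correct and follows essentially the same route as the paper's: both reduce to the root-set description $\Phi_{H_v} = v(\Phi_H)\cap\Phi_M$, use the key fact that $v\in W^M$ forces $v^{-1}(\Phi_M^+)\subseteq\Phi^+$ (the paper states this as $\Phi_v\cap\Phi_M^+=\emptyset$, which you justify slightly more explicitly via $\Phi_v\subseteq\Phi(\fu_Q)$ and disjointness from $\Phi_M$), and then verify the two Hessenberg conditions by the same computation $v^{-1}(\delta+\alpha)=v^{-1}(\delta)+v^{-1}(\alpha)\in\Phi_H$.
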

\begin{proof}  We have only to show that $\Phi_M^+\subseteq \Phi_{H_v}$ and that $\Phi_{H_v}$ is closed under addition with roots from $\Phi_M^+$.  Note that $\Phi_{H_v}=v(\Phi_H)\cap \Phi_M$.  Let $\alpha\in\Phi_M^+$ and write $v^{-1}(\alpha)=\gamma$, so $\alpha=v(\gamma)$ for some $\gamma\in\Phi^+$ since $\Phi_v \cap \Phi^+_M=\emptyset$.

First, $\gamma\in\Phi^+\subseteq \Phi_H$ and therefore $\alpha=v(\gamma)\in v(\Phi_H)$ and $\alpha\in \Phi_M$ implying that $\alpha\in \Phi_{H_v}$.  Thus $\Phi_M^+\subseteq \Phi_{H_v}$.  Next, let $v(\beta)\in \Phi_{H_v}$ such that $\alpha+v(\beta)$ is a root of $\Phi_M$.  Then 
		\[
				\alpha+v(\beta)=v(\gamma)+v(\beta)=v(\gamma+\beta)\in v(\Phi_H)
		\]
since $\gamma\in \Phi^+, \beta\in \Phi_H$ and $\Phi_H$ is closed under addition of roots from $\Phi^+$.  So $\alpha+v(\beta)\in v(\Phi_H)\cap \Phi_M= \Phi_{H_{v}}$, i.e. $\Phi_{H_v}$ is closed with respect to addition with roots from $\Phi_M^+$.
\end{proof}

Let $Z=Z_G(M)^0$ as in Section \ref{fixedpts} and let $\mu: \C^* \to T$ be a dominant 1-parameter subgroup such that $\B^Z=\B^{\mu}$.  The $Z$-action on $\B$ restricts nicely to the Hessenberg variety in the following sense.

\begin{prop} \label{reduction}  Fix $X\in \fg$ with Jordan decomposition $X=S+N$.  Let $H$ be a Hessenberg space of $\fg$ with respect to $\fb$ and let $w\in W$ have decomposition $w=yv$ where $y\in W_M$ and $v\in W^M$.  Then the isomorphism $X_w^{\mu}=X_w^Z\cong X_y$ given in Remark \ref{isom} restricts to an isomorphism
		\begin{eqnarray*}
				X_w^Z \cap \B(X,H) &\to& X_y\cap\B(N,H_v)\\
					u\dot{w} \cdot \fb &\mapsto& u\dot{y} \cdot \fb_M
		\end{eqnarray*}
where $\B(N,H_v)$ is the Hessenberg variety in $\B(M)$ associated to nilpotent element $N\in \fm$ and Hessenberg space $H_v$.
\end{prop}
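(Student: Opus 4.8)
The plan is to verify that the variety isomorphism $\phi : X_w^Z = X_w^{\mu} \to X_y$ of Remark \ref{isom}, namely $u\dot{w}\cdot\fb \mapsto u\dot{y}\cdot\fb_M$ for $u\in U^y$, carries the intersection $X_w^Z\cap\B(X,H)$ bijectively onto $X_y\cap\B(N,H_v)$. Since $\phi$ is already known to be an isomorphism of varieties, its restriction to the subvariety $X_w^Z\cap\B(X,H)$ is an isomorphism onto its (set-theoretic) image with the induced structure; so it suffices to prove the equivalence, for every $u\in U^y$,
\[
		u\dot{w}\cdot\fb\in\B(X,H)\iff u\dot{y}\cdot\fb_M\in\B(N,H_v),
\]
which shows simultaneously that $\phi$ maps the left-hand intersection into the right-hand one and that it is surjective onto it (recalling $X_y=\{u\dot{y}\cdot\fb_M : u\in U^y\}$).

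Unwinding the two conditions, the left reads $u^{-1}\cdot X\in\dot{w}\cdot H$ and the right reads $u^{-1}\cdot N\in\dot{y}\cdot H_v$, the latter computed inside $\fm$. The crucial input is that $u\in U^y\subseteq U_M\subseteq M=Z_G(S)$, which I read off from the paper's computation $(U^w)^{\mu}\cong U^y$ with $\fu^y\subset\fm$. Because $u$ centralizes $S$ we get $u^{-1}\cdot X = S + u^{-1}\cdot N$; and since $N\in\fm$ and $u\in M$ we have $u^{-1}\cdot N\in\fm$, while $S\in\ft\subset\fm$. Hence $u^{-1}\cdot X\in\fm$, so the condition $u^{-1}\cdot X\in\dot{w}\cdot H$ is equivalent to $u^{-1}\cdot X\in\dot{w}\cdot H\cap\fm$.

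Next I would compute $\dot{w}\cdot H\cap\fm$. Writing $\dot{w}=\dot{y}\dot{v}$ (the ambiguity up to $T$ is harmless since both $H$ and $\fm$ are $T$-stable) and using that $y\in W_M$ forces $\dot{y}\in M$, so conjugation by $\dot{y}$ preserves $\fm$, I obtain
\[
		\dot{w}\cdot H\cap\fm = \dot{y}\cdot(\dot{v}\cdot H)\cap\dot{y}\cdot\fm = \dot{y}\cdot(\dot{v}\cdot H\cap\fm) = \dot{y}\cdot H_v,
\]
by the definition $H_v=\dot{v}\cdot H\cap\fm$. Finally, since $S\in\ft=\dot{y}\cdot\ft\subseteq\dot{y}\cdot\fb_M\subseteq\dot{y}\cdot H_v$, the semisimple part $S$ always lies in $\dot{y}\cdot H_v$, so $S+u^{-1}\cdot N\in\dot{y}\cdot H_v$ if and only if $u^{-1}\cdot N\in\dot{y}\cdot H_v$. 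This last condition is exactly $u\dot{y}\cdot\fb_M\in\B(N,H_v)$, completing the equivalence.

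The main obstacle is the reduction of the $\fg$-condition on $X$ to an $\fm$-condition on $N$: everything hinges on $u$, living in the Levi $M$, fixing $S$, together with the fact that $S$ is absorbed into every $\dot{y}\cdot H_v$. Once the identity $\dot{w}\cdot H\cap\fm=\dot{y}\cdot H_v$ is in hand, the rest is bookkeeping. The only points I would take care to confirm are that $U^y\subseteq U_M$ (so that $u$ genuinely centralizes $S$ and preserves $\fm$), which follows from $y\in W_M$ giving $\Phi_y\subseteq\Phi_M^+$, and that the $T$-ambiguity in the representatives $\dot{w},\dot{y},\dot{v}$ does not disturb any of the $T$-stable subspaces involved.
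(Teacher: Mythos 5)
Your proof is correct and takes essentially the same route as the paper's: both reduce to the pointwise equivalence $u^{-1}\cdot X\in\dot{w}\cdot H \iff u^{-1}\cdot N\in\dot{y}\cdot H_v$ for $u\in U^y$, using exactly the same key inputs, namely that $u,\dot{y}\in M=Z_G(S)$ fix $S$, that $N\in\fm$ so everything lands in $\fm$, and that $S$ is absorbed into the Hessenberg space. The only difference is cosmetic bookkeeping: the paper conjugates by $\dot{y}^{-1}$ first and then intersects with $\fm$ (obtaining $\dot{v}\cdot H\cap\fm=H_v$), whereas you intersect with $\fm$ first via the identity $\dot{w}\cdot H\cap\fm=\dot{y}\cdot H_v$ and then absorb $S$.
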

\begin{proof}  We must show that $u \dot{w} \cdot \fb \in \B(X,H)$ if and only if $u \dot{y} \cdot \fb_M \in \B(N,H_v)$ for all $u\in U^y$.  We have
		\begin{eqnarray*}
				u^{-1}\cdot X \in \dot{w}\cdot H &\Leftrightarrow& \dot{y}^{-1}u^{-1}\cdot X\in \dot{v}\cdot H\\
									&\Leftrightarrow& \dot{y}^{-1}u^{-1} \cdot S + \dot{y}^{-1}u^{-1}\cdot N \in \dot{v}\cdot H\\
									&\Leftrightarrow&  S+ \dot{y}^{-1}u^{-1}\cdot N \in \dot{v}\cdot H\\
									&\Leftrightarrow& \dot{y}^{-1}u^{-1}\cdot N \in \dot{v}\cdot H\cap \fm\\
									&\Leftrightarrow& u^{-1}\cdot N \in \dot{y}\cdot H_v			
		\end{eqnarray*}
since $S\in \dot{v}\cdot H$ for all $v\in W^M$ and $\dot{y},u \in M, N\in \fm$ implies $\dot{y}^{-1}u^{-1}\cdot N\in \fm$.
\end{proof}

\begin{thm} \label{pavingX}  Suppose $X\in \fg$ has Jordan decomposition $X=S+N$ and $N$ is regular in some Levi subalgebra of $\fm$, where $\fm$ is the Lie algebra of Levi subgroup $M=Z_G(S)$.  Then $\B(X,H)$ is paved by affines.
\end{thm}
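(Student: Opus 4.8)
The plan is to transcribe the two-stage vector bundle argument of Theorem~\ref{paving}, using the fixed-point reduction of Proposition~\ref{reduction} to push the problem into the flag variety of $M$, where the nilpotent case already applies. By Remark~\ref{paveit} it suffices to prove that each nonempty intersection $X_w \cap \B(X,H)$ is isomorphic to an affine space $\C^d$, so I would fix $w \in W$ and write $w = yv$ with $y \in W_M$ and $v \in W^M$ as in Lemma~\ref{decomp}.

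First I would record smoothness. By Lemma~\ref{Uorb2} the orbit satisfies $U \cdot X = X + \V$ with $\V = \V_N \oplus \fu_Q$ a direct sum of root spaces contained in $\fu$, and $X \notin \V$: if $S \neq 0$ this is clear since $S \in \ft$ while $\V \subset \fu$, and if $S = 0$ it reduces to $N \notin \V_N$. Proposition~\ref{smooth} then shows each nonempty $X_w \cap \B(X,H)$ is smooth.

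Next I would run the torus reduction. With $Z = Z_G(M)^0$ and $\mu$ the associated dominant one-parameter subgroup satisfying $\B^\mu = \B^Z$, equation~(\ref{vb}) supplies a vector bundle $\pi_\mu : X_w \to X_w^\mu = X_w^Z$ whose fiber lies in $\fu_Q$, so $\mu$ acts on it with strictly positive weights. Since $\mu(z) \in Z$ centralizes $M$ and $X \in \fm$, we have $\mu(z)\cdot X = X$, hence for $u\dot{w}\cdot\fb \in X_w \cap \B(X,H)$,
$$
(\mu(z)u)^{-1}\cdot X = u^{-1}\cdot(\mu(z^{-1})\cdot X) = u^{-1}\cdot X \in \dot{w}\cdot H,
$$
so $X_w \cap \B(X,H)$ is $\mu$-stable. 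Being smooth and closed in $X_w$, it meets the hypotheses of Lemma~\ref{vb1}, which yields a vector sub-bundle $\pi_\mu : X_w \cap \B(X,H) \to X_w^Z \cap \B(X,H)$.

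Finally I would identify the base and invoke the nilpotent case. Proposition~\ref{reduction} gives $X_w^Z \cap \B(X,H) \cong X_y \cap \B(N,H_v)$, a Schubert cell intersection in the flag variety $\B(M)$ for the nilpotent $N \in \fm$ and Hessenberg space $H_v$. Since $N$ is regular in a Levi subalgebra of $\fm$, Theorem~\ref{paving} applies inside $M$, and its proof shows each such intersection is isomorphic to some $\C^{d'}$. Thus $X_w \cap \B(X,H)$ is a vector bundle over $\C^{d'}$, hence trivial, so $X_w \cap \B(X,H) \cong \C^d$; Remark~\ref{paveit} then completes the proof. The main obstacle is the bookkeeping needed to make Lemma~\ref{vb1} applicable while landing the reduction exactly where Theorem~\ref{paving} can be used: one must simultaneously control smoothness of the full intersection (via Lemma~\ref{Uorb2} and Proposition~\ref{smooth}) and $\mu$-stability, and observe that the $\mu$-fixed locus of $\B(X,H)$ is governed entirely by the nilpotent part $N$ inside $M$ --- precisely the content of Proposition~\ref{reduction}. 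Once this structure is in place, the argument parallels that of Theorem~\ref{paving} almost verbatim.
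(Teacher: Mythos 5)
Your proposal is correct and follows essentially the same route as the paper's own proof: smoothness of the intersections via Lemma \ref{Uorb2} and Proposition \ref{smooth}, the vector bundle of equation (\ref{vb}) with $\mu$-stability feeding into Lemma \ref{vb1}, the fixed-point identification $X_w^{\mu}\cap \B(X,H)\cong X_y\cap \B(N,H_v)$ from Proposition \ref{reduction}, and an appeal to the proof of Theorem \ref{paving} inside $M$ to conclude that the base is affine and the tower trivializes. Your explicit verification that $X\notin\V$ (splitting into the cases $S\neq 0$ and $S=0$) is a small addition the paper leaves implicit, but the argument is otherwise the same.
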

\begin{proof}  Fix a Hessenberg space $H$ with respect to $\fb$.  By Lemma \ref{Uorb2} there exists a direct sum of root spaces $\V\subset \fu$ such that $X\notin \V$ and $U\cdot X =X+\V$.  Therefore by Proposition \ref{smooth}, $X_w\cap \B(X,H)\neq \emptyset$ if and only if $N\in \dot{w}\cdot H$ and when $X_w\cap \B(X,H)\neq \emptyset$, the intersection is smooth.  Recall that equation (\ref{vb}) exhibits a vector bundle $\pi_{\mu}: X_w \to X_w^{\mu}$ with fiber preserving a strictly positive $\C^*$-action induced by $\mu$.  Since $\fm=\fg_0(\mu)$ and $X\in \fm$ this $\C^*$-action fixes $X$ and therefore the intersection $X_w\cap \B(X,H)$ is $\C^*$-stable.  Apply Lemma \ref{vb1} to get a vector sub-bundle $\pi_{\mu}: X_w\cap \B(X,H) \to X_w^{\mu}\cap \B(X,H)$.  

By Proposition \ref{reduction}, $X_w^{\mu}\cap \B(X,H) \cong X_y\cap \B(N, H_v)$ where $\B(N,H_v)$ is the Hessenberg variety associated to the nilpotent element $N\in \fm$ and Hessenberg space $H_v$ with resepect to $\fb_M$.  By assumption, $N$ is regular in some Levi subalgebra of $\fm$.  Therefore by the proof of Theorem \ref{paving}, $X_y\cap \B(N, H_v)$ is the total space of a trivial vector bundle over $\{ \dot{y} \cdot \fb_M \}$.  Using the identification $X_w^{\mu}\cong X_y$ given in Remark \ref{isom}, there is a tower of vector bundles
		\[
				\xymatrix{ X_w\cap \B(X,H) \ar[d]^{\pi_{\mu}} \\ X_w^{\mu}\cap \B(X,H) \ar[d]\\ \{ \dot{w}\cdot \fb \} }
		\]
over the fixed point $\dot{w}\cdot \fb$.  The composition must be trivial, so $X_w\cap\B(X,H) \cong \C^{d}$ for some $d\in \Z$.  Now the result follows from Remark \ref{paveit}.  
\end{proof}

\begin{cor} \label{dimen}  Suppose $X\in \fg$ has Jordan decomposition $X=S+N$ and satisfies the conditions of Theorem \ref{pavingX}.  Fix a Hessenberg space $H$ with respect to $\fb$.  Then if $X_w\cap \B(N,H)$ is nonempty, it has dimension
		\[
				 \dim \left( X_y\cap\B(N,H_v)\right)+|y(\Phi_v) \cap w (\Phi_H^-)|
		\]
where $w=yv$ for $y\in W_M$ and $v\in W^M$
\end{cor}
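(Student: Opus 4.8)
The plan is to read off the dimension from the tower of vector bundles constructed in the proof of Theorem \ref{pavingX}. That tower exhibits $X_w\cap\B(X,H)$ as the total space of a vector bundle over $X_w^{\mu}\cap\B(X,H)$, and Proposition \ref{reduction} identifies the base with $X_y\cap\B(N,H_v)$. Hence $\dim(X_w\cap\B(X,H))=\dim(X_y\cap\B(N,H_v))+r$, where $r$ is the rank of the restricted bundle, and it suffices to prove $r=|y(\Phi_v)\cap w(\Phi_H^-)|$. I would establish this by computing both dimensions with Proposition \ref{smooth} and comparing.

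First I would record the two dimension formulas. By Lemma \ref{Uorb2}, $U\cdot X=X+\V$ with $\V=\V_N\oplus\fu_Q$, so Proposition \ref{smooth} gives
\[
\dim(X_w\cap\B(X,H))=|\Phi_w|-\dim \V/(\V\cap\dot w\cdot H).
\]
Applying Proposition \ref{smooth} inside $\fm$ to the nilpotent $N$ (whose $U_M$-orbit is $N+\V_N$ with $N\notin\V_N$) and the Hessenberg space $H_v$ yields
\[
\dim(X_y\cap\B(N,H_v))=|\Phi_y|-\dim \V_N/(\V_N\cap\dot y\cdot H_v).
\]
Since Corollary \ref{roots} gives $|\Phi_w|=|\Phi_y|+|y(\Phi_v)|$, proving the formula reduces to a combinatorial comparison of the two quotient dimensions.

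Because $\V$, $\V_N$, $\dot w\cdot H$ and $\dot y\cdot H_v$ are all sums of root spaces, each quotient dimension is the cardinality of an explicit set of roots. Using $\Phi_{H_v}=v(\Phi_H)\cap\Phi_M$ and the fact that $y\in W_M$ permutes $\Phi_M$, the roots of $\dot y\cdot H_v$ inside $\fm$ are $w(\Phi_H)\cap\Phi_M$; as $\Phi(\V_N)\subseteq\Phi_M^+$ this gives $\dim(\V_N\cap\dot y\cdot H_v)=|\Phi(\V_N)\cap w(\Phi_H)|$. Since $\Phi(\V_N)$ and $\Phi(\fu_Q)$ are disjoint, $\dim(\V\cap\dot w\cdot H)=|\Phi(\V_N)\cap w(\Phi_H)|+|\Phi(\fu_Q)\cap w(\Phi_H)|$. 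Subtracting, the $\V_N$-contributions cancel and the difference of the two codimensions equals $|\Phi(\fu_Q)\cap w(\Phi_H^c)|$, where $\Phi_H^c=\Phi\setminus\Phi_H$; thus $r=|y(\Phi_v)|-|\Phi(\fu_Q)\cap w(\Phi_H^c)|$.

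The final step, which I expect to be the main (though elementary) obstacle, is to turn this into $|y(\Phi_v)\cap w(\Phi_H^-)|$ by pinning down exactly which roots of $\fu_Q$ are counted. The key points are that $\Phi^+\subseteq\Phi_H$ forces $\Phi_H^c\subseteq\Phi^-$, and that $\Phi_w=y(\Phi_v)\sqcup\Phi_y$ with $y(\Phi_v)\subseteq\Phi(\fu_Q)$ and $\Phi_y\subseteq\Phi_M$. If $\gamma\in\Phi(\fu_Q)$ has $w^{-1}(\gamma)\in\Phi_H^c\subseteq\Phi^-$ then $\gamma\in\Phi_w$, and since $\gamma\notin\Phi_M$ it must lie in $y(\Phi_v)$; hence $\Phi(\fu_Q)\cap w(\Phi_H^c)=y(\Phi_v)\cap w(\Phi_H^c)$. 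Finally, for $\gamma\in y(\Phi_v)$ one has $w^{-1}(\gamma)\in\Phi^-=\Phi_H^-\sqcup\Phi_H^c$, so $w(\Phi_H^-)$ and $w(\Phi_H^c)$ partition $y(\Phi_v)$, giving $r=|y(\Phi_v)|-|y(\Phi_v)\cap w(\Phi_H^c)|=|y(\Phi_v)\cap w(\Phi_H^-)|$ and the claimed dimension.
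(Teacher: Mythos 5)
Your proposal is correct and follows essentially the same route as the paper: two applications of Proposition \ref{smooth} (once for $X$ in $\fg$ via Lemma \ref{Uorb2}, once for $N$ in $\fm$), the splitting $|\Phi_w|=|\Phi_y|+|y(\Phi_v)|$ from Corollary \ref{roots}, and the decomposition $\V\cap\dot{w}\cdot H=(\V_N\cap\dot{y}\cdot H_v)\oplus(\fu_Q\cap\dot{w}\cdot H)$, with the final count reducing to $|y(\Phi_v)\cap w(\Phi_H^-)|$. The only differences are cosmetic: your opening vector-bundle framing is redundant (you compute both dimensions independently anyway), and you explicitly carry out the root-theoretic steps that the paper asserts without proof or delegates to ``a calculation similar to Corollary \ref{Nreg}.''
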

\begin{proof}  To compute the dimension of the nonempty set $X_w\cap \B(N,H)$ recall that $\V=\V_N \oplus \fu_Q$.  Now $\V\cap \dot{w}\cdot H=(\V_N \cap \dot{y}\cdot H_v )\oplus (\fu_Q\cap \dot{w}\cdot H) $ so by Proposition \ref{smooth} and Corollary \ref{roots}
		\begin{eqnarray*}
				\dim X_w\cap \B(X,H) &=& |\Phi_w| - \dim \V / (\V\cap \dot{w}\cdot H)\\
								  &=& |\Phi_y| - \dim \V_N/(\V_N \cap \dot{y}\cdot H_v) + |y(\Phi_v)| - 															\dim \fu_Q/(\fu_Q\cap \dot{w}\cdot H). 
		\end{eqnarray*}
A second application of Proposition \ref{smooth} yields the equality 
		\[
				|\Phi_y| - \dim \V_N/(\V_N \cap \dot{y}\cdot H_v)=\dim \left(X_y \cap \B(N,H_v)\right).
		\]
Finally, $|y(\Phi_v)| - \dim \fu_Q/(\fu_Q\cap \dot{w}\cdot H)= |y(\Phi_v) \cap w(\Phi_H^-)|$ by a calculation similar to that in the proof of Corollary \ref{Nreg}.
\end{proof}

\begin{rem}  Theorem \ref{pavingX} applies to Hessenberg varieties $\B(X,H)$ when $X$ is a semisimple element and when $X$ is a regular element.  Indeed, if $X$ is semisimple, then $N=0$ is a regular element of $\ft\subset \fm$.  If $X$ is regular, then $N$ is a regular element of $\fm$.  In both cases $X\in \fg$ satisfies the assumptions of the Theorem.
\end{rem}

\begin{rem}  Theorem \ref{pavingX} gives an affine paving of the Springer variety $\B^X=\B(X,\fb)$ when $X\in \fg$ satisfies the assumptions of the Theorem.
\end{rem}

\begin{cor}  Suppose $X\in \fg$ has Jordan decomposition $X=S+N$ and satisfies the assumptions of Theorem \ref{pavingX}.  Fix a Hessenberg space $H$ with respect to $\fb$.  Then for all $w=yv$ where $y\in W_M$ and $v\in W^M$ we have the following.
		\begin{enumerate}
				\item If $N=0$, i.e. if $X$ is a semisimple element, then $X_w\cap \B(X,H)$ is nonempty for all $w\in W$ and 
							\[
									\dim \left( X_w\cap \B(X,H) \right) = |\Phi_y| + |y(\Phi_v) \cap w(\Phi_H^-)|.
							\]
				\item If $N$ is regular in $\fm$, i.e. if $X$ is a regular element, then $X_w\cap \B(X,H)$ is nonempty if and only if 					$\Delta_M \subset y(\Phi_{H_v})$.  When $X_w\cap \B(N,H)\neq \emptyset$,
							\[
									\dim \left( X_w\cap \B(X,H) \right) 																					= |\Phi_y \cap y(\Phi^-_{H_v})|+|y(\Phi_v) \cap w(\Phi_H^-)|.
							\]
					
		\end{enumerate}
\end{cor}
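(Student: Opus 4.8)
The plan is to feed both assertions through the dimension formula already established in Corollary \ref{dimen}, which states that whenever the intersection is nonempty
		\[
				\dim\left( X_w\cap \B(X,H) \right) = \dim\left( X_y\cap \B(N,H_v) \right) + |y(\Phi_v)\cap w(\Phi_H^-)|.
		\]
Thus in each case it remains only to evaluate the single term $\dim\left( X_y\cap \B(N,H_v) \right)$ by identifying the Hessenberg variety $\B(N,H_v)$ inside the flag variety of $M$. For the nonemptiness statements I would exploit two facts from the previous section: by Proposition \ref{reduction} the fixed-point intersection $X_w^Z\cap \B(X,H)$ is isomorphic to $X_y\cap \B(N,H_v)$, and by the proof of Theorem \ref{pavingX} the full intersection $X_w\cap \B(X,H)$ is the total space of a vector bundle over $X_w^\mu\cap \B(X,H)$. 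Consequently $X_w\cap \B(X,H)$ is nonempty if and only if $X_y\cap \B(N,H_v)$ is nonempty, so every nonemptiness claim reduces to the corresponding statement for the reductive group $M$.

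For part (1) the element $N$ is zero, so for every $v\in W^M$ the defining condition $g^{-1}\cdot 0 \in H_v$ holds vacuously and $\B(0,H_v)$ is the entire flag variety $\B(M)$. Hence $X_y\cap \B(0,H_v) = X_y$, which has dimension $|\Phi_y|$, and nonemptiness holds for all $w$ (this also follows directly from Proposition \ref{smooth}, since $N=0\in \dot{w}\cdot H$ always). Substituting $\dim\left( X_y\cap \B(N,H_v)\right) = |\Phi_y|$ into the formula above yields the stated dimension $|\Phi_y| + |y(\Phi_v)\cap w(\Phi_H^-)|$.

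For part (2) the element $N$ is regular nilpotent in $\fm$. By the $\fm$-analogue of Remark \ref{Nreg2}, the Borel $\fb_M$ chosen in Section \ref{arbitrary} may be taken so that $N$ is the sum of all simple root vectors of $\fm$, so that $\B(N,H_v)$ is exactly the Hessenberg variety of a regular nilpotent element of $M$. I would then apply Corollary \ref{Nreg} verbatim, with the roles of $G,\fb,H,w$ played by $M,\fb_M,H_v,y$. This produces both the nonemptiness criterion $\Delta_M\subset y(\Phi_{H_v})$ and the equality $\dim\left( X_y\cap \B(N,H_v)\right) = |\Phi_y\cap y(\Phi_{H_v}^-)|$, with all roots now read inside $\Phi_M$. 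Combining with the reduction above and the formula of Corollary \ref{dimen} gives the claimed criterion $\Delta_M\subset y(\Phi_{H_v})$ and dimension $|\Phi_y\cap y(\Phi_{H_v}^-)| + |y(\Phi_v)\cap w(\Phi_H^-)|$.

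The two substitutions are routine; the only point requiring care is the bookkeeping involved in transporting Corollary \ref{Nreg} to the Levi subgroup $M$. One must verify that $N$ genuinely sits in the "sum of simple root vectors" form relative to $\fb_M$, and that the sets $\Phi_y$ and $\Phi_{H_v}^-$ are consistently interpreted as subsets of $\Phi_M$ (this is harmless because $y\in W_M$ forces $\Phi_y\subseteq \Phi_M^+$). I expect this identification of $\B(N,H_v)$ with a regular-nilpotent Hessenberg variety for $M$, together with the transfer of the nonemptiness condition through the vector bundle, to be the main obstacle; once it is in place, the dimension count follows immediately.
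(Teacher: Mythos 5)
Your proposal is correct and follows essentially the same route as the paper: both reduce via Corollary \ref{dimen} and Proposition \ref{reduction} to the Hessenberg variety $\B(N,H_v)$ in $\B(M)$, identify it with the full flag variety $\B(M)$ when $N=0$ (part (1)), and apply Corollary \ref{Nreg} to the regular nilpotent $N$ in $\fm$ (part (2)). The only cosmetic difference is that you route the nonemptiness claim through the vector bundle structure from Theorem \ref{pavingX}, while the paper reads it off directly from Proposition \ref{smooth} together with the equivalence $N\in \dot{w}\cdot H \Leftrightarrow N\in \dot{y}\cdot H_v$ from Proposition \ref{reduction}; both are valid and rest on the same ingredients.
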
 
\begin{proof}  First, by Proposition \ref{smooth}, if $N\in \dot{w}\cdot H$ then $X_w\cap \B(X,H)$ is nonempty.  When $X$ is semisimple, $N=0\in \dot{w}\cdot H$ for all $w\in W$ and $\B(N,H_v)=\B(M)$, so (1) is a direct consequence of Corollary \ref{dimen}.  For part (2), $N\in \dot{w}\cdot H$ if and only if $N\in \dot{y}\cdot H_v$ using the identification given in Proposition \ref{reduction}.  Therefore $X_w\cap \B(X,H)$ is nonempty if and only if $N\in \dot{y}\cdot H_v$.  The statement now follows from Corollary \ref{Nreg} and Corollary \ref{dimen}. 
\end{proof}

%

%


\begin{thebibliography}{100000}

\bibitem[1]{BGG} 
I. N. Bernstein, I.M. Gel'fand, and S.I. Gel'fand, 
\textit{Schubert cells and cohomology of the spaces $G/P$}, 
Uspehi Mat. Nauk {\bf 28} (1973),
no. 3(171), 3-26.

\bibitem[2]{BH}
H. Bass and W. Haboush,
{\em Linearizing certain reductive group actions}, 
Trans. Amer. Math. Soc. {\bf 292} (1985), 463-482.

\bibitem[3]{CG}
N. Chriss and V. Ginzburg, 
{\em Representation Theory and Complex Geometry},
Birkh\"{a}user, Boston, 1997.

\bibitem[4]{dCLP}
\newblock 
C. de Concini, G. Lusztig, and C. Procesi, 
\newblock \emph{Homology of the zero-set of a nilpotent vector field on a flag manifold}, 
\newblock J. Amer. Math. Soc. \textbf{1} (1988), 15-34.

\bibitem[5]{dMPS}
F. de Mari, C. Procesi, and M. A. Shayman, {\em Hessenberg Varieties}, Trans. Amer. Math. Soc. {\bf 332} (1992),
529-534.

\bibitem[6]{Ha}
R. Hartshorne,
{\em Algebraic Geometry},
Springer-Verlag, New York, 1977.

\bibitem[7]{H}
J. Humphreys, {\em Linear Algebraic Groups}, Springer-Verlag, New York, 1964.

\bibitem[8]{K} 
B. Kostant,
{\em Lie algebra cohomology and the generalized Borel-Weil Theorem},
Ann. of Math. (2) {\bf 74} (1961), 329-387.

\bibitem[9]{T}
J. Tymoczko, {\em Linear conditions imposed on flag varieties},
Amer. J. Math. {\bf 128} (2006), 1587-1604

\bibitem[10]{T2}
J. Tymoczko, {\em Paving Hessenberg Varieties by affines},
Sel. math., New ser. {\bf 13} (2007), 353-367.

\bibitem[11]{T3}
J. Tymoczko,
{\em Decomposing Hessenberg varieties over classical groups},
Dissertation, Princeton University,  arXiv:math/0211226.

\end{thebibliography}
\end{document}